\definecolor{red}{rgb}{1,0,0}
\definecolor{blue}{rgb}{0,0,1}
\newcommand{\ms}{\medskip}
\newcommand{\bpf}{\begin{proof}}
\newcommand{\epf}{\end{proof}\ms}
\newtheorem{theorem}{Theorem}
\newtheorem{corollary}{Corollary}
\newtheorem{lemma}{Lemma}
\newtheorem{proposition}{Proposition}
\newtheorem{observation}{Observation}
\newtheorem{example}{Example}
\theoremstyle{definition}
\newtheorem{definition}{Definition}
\begin{document}
\date{}
\title{Connected power domination in graphs}

%\author{Boris Brimkov\thanks{Computational and Applied Mathematics, Rice University, Houston, TX, 77005 (boris.brimkov@rice.edu)}, 
%Derek Mikesell\thanks{Computational and Applied Mathematics, Rice University, Houston, TX, 77005 (derek.j.mikesell@rice.edu)}, 
%Logan Smith\thanks{Computational and Applied Mathematics, Rice University, Houston, TX, 77005 (logan.smith@rice.edu)}}

\author{Boris Brimkov\thanks{Department of Computational and Applied Mathematics, Rice University, Houston, TX, 77005, USA (boris.brimkov@rice.edu, derek.j.mikesell@rice.edu, logan.smith@rice.edu)}, 
Derek Mikesell$^*$,
Logan Smith$^*$}

\maketitle

\begin{abstract}
The study of power domination in graphs arises from the problem of placing a minimum number of measurement devices in an electrical network while monitoring the entire network. A power dominating set of a graph is a set of vertices from which every vertex in the graph can be observed, following a set of rules for power system monitoring. In this paper, we study the problem of finding a minimum power dominating set which is connected; the cardinality of such a set is called the \emph{connected power domination number} of the graph. We show that the connected power domination number of a graph is NP-hard to compute in general, but can be computed in linear time in cactus graphs and block graphs. We also give various structural results about connected power domination, including a cut vertex decomposition and a characterization of the effects of various vertex and edge operations on the connected power domination number. Finally, we present novel integer programming formulations for power domination, connected power domination, and power propagation time, and give computational results.
\end{abstract}

\section{Introduction}

Electrical power companies must constantly monitor their electrical networks in order to detect and respond to failures in the networks. To this end, they place devices called Phase Measurement Units (PMUs) at select locations in the system. A PMU can directly measure the currents and phase angles of all transmission lines incident to its location. Moreover, physical laws governing electrical circuits (e.g. Kirchhoff's circuit laws) can be leveraged to gain information about parts of the network which are not directly observed. Due to the high cost of PMUs, it is a problem of interest to find the smallest number of PMUs (and their locations) from which an entire network can be observed. This PMU placement problem has been explored extensively in the electrical engineering literature; see \cite{EEprobabilistic,Baldwin93,Brunei93,EEinformationTheoretic,EEtaxonomy,Mili91,EEtabuSearch,EEmultiStage}, and the bibliographies therein for various placement strategies and computational results. 

Haynes et al. \cite{powerdom3} formulated the PMU placement problem as a dynamic graph coloring problem, where vertices represent electric nodes and edges represent connections via transmission lines. In this model, a set of initially colored vertices in a graph (corresponding to locations of PMUs) cause other vertices to become colored (i.e. to be observed by the PMUs); the goal is to find the smallest set of initially colored vertices which causes all other vertices to become colored. More precisely, let $G=(V,E)$ be a graph and let $S\subset V$ be a set of initially colored vertices; the physical laws by which PMUs can observe a network give rise to the following color change rules (see \cite{BH05}):
\begin{enumerate}
\item[1)] Every neighbor of an initially colored vertex becomes colored.
\item[2)] Whenever there is a colored vertex with exactly one uncolored neighbor, that neighbor becomes colored.
\end{enumerate}
$S$ is a {\em power dominating set} of $G$ if all vertices in $G$ become colored after applying rule 1) once, and rule 2) as many times as possible (i.e. until no more vertices can change color). The {\em power domination number} of $G$, denoted $\gamma_P(G)$, is the cardinality of a minimum power dominating set. $S$ is a {\em zero forcing set} of $G$ if all vertices in $G$ become colored after applying rule 2) as many times as possible (and not applying rule 1) at all). The {\em zero forcing number} of $G$, denoted $Z(G)$, is the cardinality of a minimum zero forcing set. The process of zero forcing was introduced independently in combinatorial matrix theory \cite{AIM-Workshop} and in quantum control theory \cite{quantum1}. 

In this paper, we study a variant of power domination which requires every set of initially colored vertices to induce a connected subgraph. Given a connected graph $G=(V,E)$, a set $S\subset V$ is a \emph{connected power dominating set} of $G$ if $S$ is a power dominating set and $G[S]$ is connected. The \emph{connected power domination number}, denoted $\gamma_{P,c}(G)$, is the cardinality of a minimum connected power dominating set. Requiring a power dominating set to be connected is motivated by the application in monitoring electrical networks: the data from PMUs is relayed by high-speed communication infrastructure to processing stations which collect and manage this data; thus, in addition to minimizing the production costs of the PMUs, an electric power company may seek to place all PMUs in a compact, connected region in the network in order to reduce the number of processing stations and related infrastructure required to collect the data.

Connected power domination was explored from a computational perspective in \cite{FanWatson} (although the problem called ``connected power domination" in \cite{FanWatson} is slightly different from the one considered here; see Section 6 for details). The connected variants of other graph problems, including connected zero forcing \cite{brimkov_fast,brimkov_extremal,brimkov_comp_and_complex}, connected domination \cite{Caro,Desormeaux,connected_dom,Sampathkumar}, and connected vertex cover \cite{con_vc2,con_vc1}, have also been extensively studied. Imposing connectivity often fundamentally changes the nature of a problem, including its complexity, structural properties, and applications. Other generalizations and extensions of power domination have also been explored \cite{chang2012,chang2015}, as has the problem of studying the number of timesteps in which the graph is colored by a power dominating set \cite{aazami2,ferrero17,liao16}.

The paper is organized as follows. In the next section, we recall some graph theoretic notions and notation. In Section 3, we present novel structural results about connected power domination, as well as some technical lemmas which are used in the sequel. In Section 4, we prove that connected power domination is NP-complete. In Section 5, we give efficient algorithms for the connected power domination numbers of trees, block graphs, and cactus graphs. In Section 6, we provide integer programming models for power domination, connected power domination, and power propagation time, and give computational results. We conclude with some final remarks and open questions in Section 7.

\section{Preliminaries}

A graph $G=(V,E)$ consists of a vertex set $V$ and an edge set $E$ of two-element subsets of $V$. The \emph{order} and \emph{size} of $G$ are denoted by $n=|V|$ and $m=|E|$, respectively. Two vertices $v,w\in V$ are \emph{adjacent}, or \emph{neighbors}, if $\{v,w\}\in E$. If $v$ is adjacent to $w$, we write $v\sim w$; otherwise, we write $v\not\sim w$. The \emph{neighborhood} of $v\in V$ is the set of all vertices which are adjacent to $v$, denoted $N(v;G)$; the \emph{degree} of $v\in V$ is defined as $d(v;G)=|N(v;G)|$. The \textit{closed neighborhood} of $v \in V$ is the set $N(v;G)\cup\{v\}$, denoted $N[v;G]$. The dependence of these parameters on $G$ can be omitted when it is clear from the context. The \textit{closed neighborhood} of a set $S \subset V$ is the set $N[S]=\cup_{v\in S}N[v]$. A set $S$ in $G$ is said to \textit{dominate} $N[S]$. Given $S \subset V$, the \emph{induced subgraph} $G[S]$ is the subgraph of $G$ whose vertex set is $S$ and whose edge set consists of all edges of $G$ which have both endpoints in $S$. The number of connected components of $G$ will be denoted by $c(G)$; an isomorphism between graphs $G_1$ and $G_2$ will be denoted by $G_1\simeq G_2$. A \emph{leaf}, or \emph{pendant}, is a vertex with degree 1. A \emph{cut vertex} is a vertex which, when removed, increases the number of connected components in $G$. A \emph{cut edge} is an edge which, when removed, increases the number of components of $G$. A \emph{biconnected component}, or \emph{block}, of $G$ is a maximal subgraph of $G$ which has no cut vertices. The \emph{disjoint union} of sets $A$ and $B$ will be denoted $A \dot\cup B$.

A \emph{chronological list of forces} $\mathcal{F}$ associated with a power dominating or zero forcing set $S$ of a graph $G$ is a sequence of forces applied to color $V(G)$ in the order they are applied. A \emph{forcing chain} for a chronological list of forces is a maximal sequence of vertices  $(v_1,\ldots,v_k)$ such that the force $v_i\to v_{i+1}$ is in $\mathcal{F}$ for $1\leq i\leq k-1$. Each forcing chain produces a distinct path in $G$, one of whose endpoints is in $S$; we will say this endpoint \emph{initiates} the forcing chain.

We also recall some terminology and notation from \cite{brimkov_comp_and_complex} which will be used in the sequel. Let $G=(V,E)\not\simeq P_n$ be a graph and $v$ be a vertex of degree at least $3$. A \emph{pendant path attached to} $v$ is a maximal set $P\subset V$ such that $G[P]$ is a connected component of $G-v$ which is a path, one of whose ends is adjacent to $v$ in $G$. The neighbor of $v$ in $P$ will be called the \emph{base} of the path, and $p(v)$ will denote the number of pendant paths attached to $v\in V$. We will also say that $p(u)=1$ if $u$ is a cut vertex which belongs to a pendant path. Similarly, a \emph{pendant tree attached to} $v$ is a set $T\subset V$ composed of the vertices of a connected component of $G-v$ which is a tree, and which has a single vertex adjacent to $v$ in $G$. 
Finally, for a connected graph $G=(V,E)\not\simeq P_n$, define:
\begin{eqnarray*}
R_1(G)&=&\{v\in V: c(G-v)=2, \; p(v)=1\}\\
R_2(G)&=&\{v\in V: c(G-v)=2, \; p(v)=0\}\\
R_3(G)&=&\{v\in V: c(G-v)\geq 3\}\\
\mathcal{M}(G)&=&R_2(G)\cup R_3(G). 
\end{eqnarray*} 
When there is no scope for confusion, the dependence on $G$ will be omitted. Note that the sets $R_1$, $R_2$, and $R_3$ partition the set of cut vertices of $G$. For convenience, we will say that $\mathcal{M}(P_n)=\emptyset$. For other graph theoretic terminology and definitions, we refer the reader to \cite{bondy}.

\section{Structural results}

We first present two technical lemmas about vertices which are contained in every connected power dominating set, and vertices which are not contained in any minimum connected power dominating set.

\begin{lemma}
\label{MR_lemma}
Let $G=(V,E)$ be a connected graph different from a path and $R$ be an arbitrary connected power dominating set of $G$. Then $\mathcal{M}(G)\subset R$.
\end{lemma}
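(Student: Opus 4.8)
The plan is to argue by contradiction. Suppose some connected power dominating set $R$ omits a vertex $v\in\mathcal{M}(G)$. Since $v$ is a cut vertex, $G[R]$ is connected, $v\notin R$, and $R\neq\emptyset$, the set $R$ lies entirely inside one component of $G-v$; call it $C_1$ and let $C_2,\dots,C_k$ (with $k\ge 2$) be the remaining components. Put $A=V(C_1)\cup\{v\}$ and $B=V(C_2)\cup\cdots\cup V(C_k)$, and note that every edge of $G$ joining $A$ to $B$ is incident to $v$. The crux is to analyze how color enters $B$ in the process started from $R$. Applying rule~1 colors only $N[R]\subseteq A$ (because $v\notin R$), so no vertex of $B$ is colored at that stage, and thereafter only rule~2 is used. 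Consider the first moment a vertex $b\in B$ becomes colored: it is forced by an already-colored vertex $w$ having $b$ as its unique uncolored neighbor. At that moment every vertex of $B$ other than $b$ is still uncolored, so $w\in A$; the only vertex of $A$ adjacent to $B$ is $v$, so $w=v$. Thus $v$ is colored by this point and, when it forces into $B$, all of its neighbors except one are colored while all of $B$ is still uncolored; hence $v$ has exactly one neighbor in $B$ in total. If $k\ge 3$ this is impossible, since $v$ is adjacent to a vertex of $C_2$ and of $C_3$. Therefore $k=2$, and in particular $v\notin R_3$.

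It remains to show $v\notin R_2$, i.e.\ that $p(v)\ge 1$. With $k=2$ we have $B=V(C_2)$ and $v$ has a unique neighbor $b_2\in V(C_2)$, which $v$ forces. After that force $v$ has no uncolored neighbors and never forces again, and no vertex outside $V(C_2)\cup\{v\}$ is adjacent to $C_2$; so every remaining vertex of $C_2$ must become colored by rule-2 forces carried out inside $C_2$ starting from the single vertex $b_2$. That is, $\{b_2\}$ is a zero forcing set of $C_2$, which forces $C_2$ to be a path with $b_2$ as an end — equivalently, $C_2$ is a pendant path attached to $v$ with base $b_2$. (If instead $R\subseteq V(C_2)$, the symmetric argument makes $C_1$ such a pendant path.) Hence $p(v)\ge 1$; moreover $p(v)=1$, for if both components of $G-v$ were pendant paths then $G\simeq P_n$, contrary to hypothesis. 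But $c(G-v)=2$ and $p(v)=1$ mean $v\in R_1$, so $v\notin R_2\cup R_3=\mathcal{M}(G)$, a contradiction. Therefore no such $v$ exists and $\mathcal{M}(G)\subseteq R$.

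The delicate step is the reduction in the second paragraph: one must check that, because color can reach $C_2$ only through $b_2$ and only via a single rule-2 force from $v$, coloring all of $C_2$ is genuinely equivalent to $\{b_2\}$ being a singleton zero forcing set of $C_2$, and that a singleton is a zero forcing set of a connected graph precisely when that graph is a path with the given vertex as an endpoint — its unique forcing chain must then be a spanning path, and the rule-2 condition at each step leaves no room for any additional edge. Everything else is routine bookkeeping about the components of $G-v$ and the two color-change rules.
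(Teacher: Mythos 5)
Your proof is correct and takes essentially the same route as the paper's: connectivity of $R$ confines it to one component of $G-v$, color can enter the remaining components only through $v$ (which rules out $v\in R_3$, since $v$ can force only one uncolored neighbor), and the remaining component must then be a pendant path, ruling out $v\in R_2$. You merely unify the two cases and spell out, via the singleton-zero-forcing-set characterization of paths, the step the paper states without detail ("not a pendant path $\Rightarrow$ cannot be forced by a single chain through $v$").
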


\begin{proof}
Let $v$ be a cut vertex of $G$ which is in $R_3(G)$. If two or more components of $G-v$ contain vertices of $R$, then since $R$ is connected and since any path between two vertices from different components of $G-v$ also contains $v$, $R$ must contain $v$. Now suppose all vertices of $R$ are contained in a single component $G_1$ of $G-v$. If $v$ is not contained in $R$, then no vertex outside $G_1\cup\{v\}$ can be dominated or forced at any timestep since $v$ will have at least two uncolored neighbors in the other components of $G-v$; this is a contradiction. Thus, every vertex of $R_3$ must be in $R$. 

Now let $v$ be a cut vertex of $G$ which is in $R_2(G)$. If both components of $G-v$ contain vertices of $R$, then by the same argument as above, $R$ must contain $v$. Now suppose that all vertices of $R$ are contained in a single component $G_1$ of $G-v$. If $v$ is not contained in $R$, then since the other component of $G-v$ is not a pendant path of $G$, it cannot be forced by a single forcing chain passing through $v$, a contradiction. Thus, every vertex of $R_2$ must be in $R$. 
Since $\mathcal{M}=R_2\cup R_3$, it follows that $\mathcal{M}\subset R$. 
\end{proof}

\begin{lemma}
\label{no_leaf_lemma}
Let $G$ be a graph different from a path. Then, no minimum connected power dominating set of $G$ contains a leaf of $G$.
\end{lemma}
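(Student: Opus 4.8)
The plan is to argue by contradiction: assume $S$ is a minimum connected power dominating set of $G$ that contains a leaf $\ell$ of $G$, and let $u$ be the unique neighbor of $\ell$ in $G$. I would split into the cases $|S|\ge 2$ and $|S|=1$, treating the first as the main case.

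For the main case $|S|\ge 2$: since $G[S]$ is connected and $\ell$'s only neighbor is $u$, connectivity of $G[S]$ forces $u\in S$ (otherwise $\ell$ would be isolated in $G[S]$). Set $S'=S\setminus\{\ell\}$. First, $G[S']$ is connected: $\ell$ has degree $1$ in $G[S]$ since its unique neighbor $u$ lies in $S$, so $\ell$ is a leaf of $G[S]$, and deleting a leaf from a connected graph on at least two vertices leaves it connected; moreover $S'$ is nonempty since $u\in S'$. Second -- and this is the crux -- $N[S']=N[S]$: indeed $N[S]=N[S']\cup N[\ell]=N[S']\cup\{\ell,u\}$, while $u\in S'$ gives $u\in N[S']$ and $\ell\in N(u)\subseteq N[S']$. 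Hence applying Rule 1 to $S$ and to $S'$ produces the same colored set $N[S]=N[S']$, and since the result of applying Rule 2 until no further change is possible depends only on this colored set and on $G$, the two power domination processes terminate with the same colored set. As $S$ is a power dominating set this set is all of $V(G)$, so $S'$ is a power dominating set as well; therefore $S'$ is a connected power dominating set with $|S'|<|S|$, contradicting the minimality of $S$.

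It remains to handle $|S|=1$, i.e. $S=\{\ell\}$, where the deletion trick is unavailable; here I would instead relate the process to zero forcing in $G-\ell$. After Rule 1 the colored set is $N[\ell]=\{\ell,u\}$. Since $u$ is colored from the start, the leaf $\ell$ never performs a force, and the only vertex whose neighborhood meets $\ell$ is $u$, for which $\ell$ is permanently colored and hence never counted among its uncolored neighbors. Consequently the Rule 2 process from $\{\ell,u\}$ in $G$ colors exactly $\ell$ together with whatever the zero forcing process from $\{u\}$ colors in the connected graph $G-\ell$. Since $S$ power dominates $G$, this forces $\{u\}$ to be a zero forcing set of $G-\ell$, whence $G-\ell$ is a path with $u$ an endpoint, and so $G$ itself is a path -- contradicting the hypothesis.

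I expect the only real subtlety to be the bookkeeping in the case $|S|=1$, namely the equivalence between the coloring process in $G$ started from $\{\ell\}$ and zero forcing in $G-\ell$ started from $\{u\}$; the main case is short once one notices that connectivity of $G[S]$ puts $u$ into $S$, which is exactly what makes $N[S\setminus\{\ell\}]=N[S]$.
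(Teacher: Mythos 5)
Your proof is correct and follows essentially the same route as the paper's: connectivity of $G[S]$ forces the leaf's neighbor into $S$, and deleting the leaf leaves a smaller connected set with the same closed neighborhood, hence still power dominating. The only difference is that you treat the case $S=\{\ell\}$ carefully via the zero forcing process in $G-\ell$, whereas the paper dismisses it in one sentence (``since $G$ is not a path, $S$ must contain another vertex''), so your argument just fills in that small step explicitly.
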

\begin{proof}
Suppose for contradiction that there is a minimum connected power dominating set $S$ of $G$ which contains a leaf $\ell$ of $G$. Since $G$ is not a path, $S$ must contain another vertex $u$ besides $\ell$. Since $S$ is connected, it must include the neighbor $v$ of $\ell$, since every path between $u$ and $\ell$ passes through $v$. However, $S\backslash \{\ell\}$ is also a connected power dominating set of $G$, since $v$ can dominate $\ell$ in the first timestep, and removing $\ell$ does not disconnect $G[S]$. This contradicts the minimality of $S$.
\end{proof}

\subsection{Cut vertex decomposition}

A \emph{trivial block} is a block consisting of two vertices, both of which belong to the same pendant path. A \emph{nontrivial} block is a block $B$ which is not trivial, together with all pendant paths attached to vertices in $B$. In this section, we present a technique for computing the connected power domination numbers of graphs with cut vertices in terms of the connected power domination numbers of their nontrivial blocks. We first introduce several definitions and lemmas which will be used in the main result of this section; some of these are adapted from \cite{restrictedPD,powerdom3}.

\begin{definition}\label{restrictPD}
Let $G=(V,E)$ be a graph and let $X\subseteq V$. A set $S\subseteq V(G)$ is a {\em connected power dominating set of $G$ subject to $X$} if $S$ is a connected power dominating set of $G$ and $X\subseteq S$. The size of a minimum connected power dominating set subject to $X$ is denoted $\gamma_{P,c}(G;X)$.
\end{definition}

\begin{example}
Let $P_n$ be a path with vertices $v_1,\ldots,v_n$, where $v_i\sim v_{i+1}$ for $1\leq i\leq n-1$. Then, for any $1\leq i,j\leq n$, we have $\gamma_{P,c}(P_n;\{v_i,v_j\})=|i-j|+1$.
\end{example}

\begin{observation}
By Lemma \ref{MR_lemma}, for any graph $G$, $\gamma_{P,c}(G;\mathcal{M}(G))=\gamma_{P,c}(G)$.
\end{observation}

\noindent The following observation is well-known in the power domination literature. It is often stated in terms of leaves, but remains valid for pendant paths.

\begin{observation}
\label{obs_leaves}
For any graph $G$, there exists a minimum power dominating set of $G$ that contains every vertex that is incident to two or more leaves.
\end{observation}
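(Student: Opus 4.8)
The plan is to start from an arbitrary minimum power dominating set of $G$ and perform a sequence of size-preserving exchanges, each replacing a leaf by a vertex with two or more leaf neighbors, until every such vertex lies in the set. Write $W$ for the set of vertices of $G$ incident to two or more leaves, and among all minimum power dominating sets of $G$ choose one, $S$, for which $|S\cap W|$ is as large as possible. It suffices to prove $W\subseteq S$, which I would do by contradiction: given any $v\in W\setminus S$, I will produce a minimum power dominating set $S'$ with $|S'\cap W|=|S\cap W|+1$.

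Fix $v\in W\setminus S$ and let $\ell_1,\dots,\ell_t$ (with $t\ge 2$) be the leaves adjacent to $v$. First I would show that at least one $\ell_i$ belongs to $S$. Since the unique neighbor of each $\ell_i$ is $v\notin S$, no $\ell_i$ can be colored by rule 1, so each $\ell_i$ is either in $S$ or colored by $v$ via rule 2 at some timestep. But a vertex performs at most one rule 2 force in any chronological list of forces (after a colored vertex colors its last uncolored neighbor it has no uncolored neighbors at any later timestep), so $v$ colors at most one of $\ell_1,\dots,\ell_t$; as $t\ge 2$, some $\ell_i$, call it $\ell$, lies in $S$.

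Now set $S'=(S\setminus\{\ell\})\cup\{v\}$, so $|S'|=|S|$. Because $\ell$ is a leaf with neighbor $v$, we have $N[\ell]=\{\ell,v\}\subseteq N[v]$, and hence $N[S]=N[\ell]\cup N[S\setminus\{\ell\}]\subseteq N[v]\cup N[S\setminus\{\ell\}]=N[S']$. After rule 1 is applied once, $S$ colors exactly $N[S]$ and $S'$ colors $N[S']\supseteq N[S]$; since the zero forcing process is monotone in the colored set, every vertex colored when starting from $S$ is also colored when starting from $S'$, so $S'$ is a power dominating set, and it is minimum. Finally, $\ell$ has degree $1$, so it has at most one leaf neighbor and thus $\ell\notin W$; therefore $|S'\cap W|=|S\cap W|+1$, contradicting the choice of $S$. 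This proves $W\subseteq S$.

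The argument is routine; the one step needing care is the claim that a vertex $v\notin S$ with two or more pendant leaves must have one of those leaves in $S$, which relies on the elementary fact that each vertex forces at most once. An essentially equivalent write-up would replace the extremal choice of $S$ by an induction on $|W\setminus S|$, peeling off one offending vertex at a time.
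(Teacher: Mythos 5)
Your proof is correct. The paper gives no proof of this observation, stating only that it is well known in the power domination literature; your exchange argument — each $\ell_i$ outside $S$ can only be forced by $v$, a vertex forces at most once, so some leaf $\ell\in S$, and then $S'=(S\setminus\{\ell\})\cup\{v\}$ works because $N[\ell]\subseteq N[v]$ and the forcing closure is monotone in the initially colored set — is essentially the standard folklore justification, and all the steps (including the at-most-one-force fact and the monotonicity of zero forcing) are sound.
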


\noindent Given a graph $G=(V,E)$ and a set $X\subseteq V$, define $\ell _r(G,X)$ as the graph obtained by attaching $r$ leaves to each vertex in $X$. The following result establishes a relationship between $\gamma_{P,c}(G;X)$ and $\ell_3(G,X)$.

\begin{proposition}
\label{prop_leaf_graph}
For any graph $G=(V,E)$ and any set $X\subseteq V$, a set $S$ is a minimum connected power dominating set of $G$ subject to $X$ if and only if $S$ is a minimum connected power dominating set of $\ell_3(G,X)$.
\end{proposition}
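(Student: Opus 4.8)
The plan is to prove both directions by showing that a connected power dominating set of $G$ containing $X$ corresponds naturally to a connected power dominating set of $\ell_3(G,X)$ of the same size, and that in $\ell_3(G,X)$ every minimum connected power dominating set must contain all of $X$. The key technical facts I would assemble first are: (i) in $\ell_3(G,X)$, each vertex $x \in X$ has at least three pendant leaves attached, so $x$ is a cut vertex with $c(\ell_3(G,X) - x) \geq 3$ whenever $X \ne \emptyset$ and $G$ is nontrivial, hence $x \in R_3(\ell_3(G,X)) \subseteq \mathcal{M}(\ell_3(G,X))$; and (ii) the added leaves themselves are leaves of $\ell_3(G,X)$, so by Lemma~\ref{no_leaf_lemma} no minimum connected power dominating set of $\ell_3(G,X)$ contains them (modulo handling the degenerate case where $\ell_3(G,X)$ is a path, which happens only in trivial situations I would dispose of separately).

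For the forward direction, suppose $S$ is a minimum connected power dominating set of $G$ subject to $X$. I would argue $S$ is a connected power dominating set of $\ell_3(G,X)$: it is connected in $\ell_3(G,X)$ since the induced subgraph on $S \supseteq X$ is unchanged, and it is power dominating because after rule~1) every added leaf adjacent to a vertex of $X \subseteq S$ is colored, and thereafter the coloring propagates through $V(G)$ exactly as in $G$ (the added leaves never block a force, since once a vertex of $X$ is colored all its pendant leaves are colored). Then I would show $S$ is minimum in $\ell_3(G,X)$: any connected power dominating set $S'$ of $\ell_3(G,X)$ must, by Lemma~\ref{MR_lemma} and fact (i), contain $\mathcal{M}(\ell_3(G,X)) \supseteq X$; and by Lemma~\ref{no_leaf_lemma} a minimum one contains no added leaf, so a minimum connected power dominating set $S'$ of $\ell_3(G,X)$ satisfies $X \subseteq S' \subseteq V(G)$, and restricting the forcing process shows $S'$ is a connected power dominating set of $G$ subject to $X$, whence $|S'| \geq \gamma_{P,c}(G;X) = |S|$.

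For the reverse direction, suppose $S$ is a minimum connected power dominating set of $\ell_3(G,X)$. By the argument just given, $X \subseteq S \subseteq V(G)$ and $S$ is a connected power dominating set of $G$ subject to $X$, so $|S| \geq \gamma_{P,c}(G;X)$; combined with the forward direction (which produces a connected power dominating set of $\ell_3(G,X)$ of size $\gamma_{P,c}(G;X)$), we get $|S| = \gamma_{P,c}(G;X)$, i.e. $S$ is a minimum connected power dominating set of $G$ subject to $X$. The main obstacle I anticipate is the careful verification that the zero-forcing propagation transfers faithfully in both directions — specifically, checking that the pendant leaves added at vertices of $X$ can neither be used to initiate a useful force nor obstruct one, and handling boundary cases ($X = \emptyset$, $G$ a single vertex, or $\ell_3(G,X)$ isomorphic to a path) so that the appeals to Lemma~\ref{MR_lemma} and Lemma~\ref{no_leaf_lemma}, which exclude paths, remain legitimate.
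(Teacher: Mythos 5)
Your proposal is correct and follows essentially the same route as the paper's proof: transfer $S$ to $\ell_3(G,X)$ by dominating the added leaves in the first timestep, then use Lemma~\ref{MR_lemma} to force $X$ into any connected power dominating set of $\ell_3(G,X)$ and minimality (Lemma~\ref{no_leaf_lemma}) to exclude the added leaves, so that a minimum set in $\ell_3(G,X)$ restricts to a connected power dominating set of $G$ subject to $X$. You are merely more explicit than the paper about why each $x\in X$ lands in $R_3(\ell_3(G,X))$ and about the degenerate cases ($X=\emptyset$, path-like situations), which is a reasonable tightening rather than a different argument.
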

\begin{proof}
Let $S$ be a minimum connected power dominating set of $G$ subject to $X$. Then, $S$ can dominate all the added leaves in $\ell_3(G,X)$ in the first timestep, and then force the rest of the vertices of $V(G)$ as in $G$. Hence $S$ is a connected power dominating set of $\ell_3(G,X)$. Suppose there is a minimum connected power dominating set $S'$ of $\ell_3(G,X)$ with $|S'|<|S|$. By Lemma~\ref{MR_lemma}, $S'$ contains all vertices in $X$. Moreover, since $S'$ is minimum, it does not contain any of the added leaves of $\ell_3(G,X)$. Thus, $S'$ is a connected power dominating set of $G$ subject to $X$, a contradiction to $S$ being minimum. The other direction follows analogously. 
\end{proof}

\noindent Let $\mu(v)$ denote the number of nontrivial blocks a vertex $v$ belongs to.

\begin{theorem}
\label{theorem_blocks}
Let $G$ be a graph with nontrivial blocks $B_1,\ldots,B_k$. For $1\leq i\leq k$, let $A_i=V(B_i)\cap \mathcal{M}(G)$. Then,
\begin{equation}
\label{eq_split}
\gamma_{P,c}(G)=\sum_{i=1}^k\gamma_{P,c}(\ell_3(B_i,A_i))-\sum_{v\in \mathcal{M}(G)}(\mu(v)-1)
\end{equation}
\end{theorem}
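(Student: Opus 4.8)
The plan is to prove the identity by showing that a minimum connected power dominating set of $G$ decomposes into pieces, one per nontrivial block, each of which is essentially a connected power dominating set of that block subject to the cut vertices of $G$ lying in it, and that the overcounting incurred by this decomposition is exactly $\sum_{v\in\mathcal{M}(G)}(\mu(v)-1)$. By Proposition~\ref{prop_leaf_graph}, $\gamma_{P,c}(\ell_3(B_i,A_i))=\gamma_{P,c}(B_i;A_i)$, so it suffices to prove $\gamma_{P,c}(G)=\sum_{i=1}^k\gamma_{P,c}(B_i;A_i)-\sum_{v\in\mathcal{M}(G)}(\mu(v)-1)$.

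First I would establish the ``$\leq$'' direction by construction. Take a minimum connected power dominating set $S_i$ of each $B_i$ subject to $A_i$; by the Observation following Definition~\ref{restrictPD} and Lemma~\ref{MR_lemma}, $A_i=V(B_i)\cap\mathcal{M}(G)\subseteq S_i$, and by Lemma~\ref{no_leaf_lemma} we may assume $S_i$ avoids the leaves of $B_i$ (the real ones; the added leaves of $\ell_3$ are handled by the proposition). Let $S=\bigcup_i S_i$. The key structural fact is that the block--cut tree of $G$ is a tree, and $\mathcal{M}(G)$ consists of cut vertices; every cut vertex $v$ with $\mu(v)\ge 1$ that sits in two or more nontrivial blocks lies in $\mathcal{M}(G)$ (since $c(G-v)\ge 2$, and in fact if it joins $\ge 2$ nontrivial blocks it is in $R_2\cup R_3$), so it is counted $\mu(v)$ times in $\sum|S_i|$ but should be counted once, giving the correction term $\sum_{v}(\mu(v)-1)$. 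I need to check that $S$ is connected in $G$: walking the block--cut tree, consecutive blocks share a cut vertex which lies in $\mathcal{M}(G)$ hence in both $S_i$'s, so the induced subgraphs glue along common vertices; pendant paths attached to a block are already included in the nontrivial block by definition. Then I must check $S$ is power dominating in $G$: apply rule 1 everywhere, then observe that forcing can be carried out block by block along the block--cut tree from the leaves inward and back out — within each $B_i$, $S_i$ forces all of $B_i$; a cut vertex $v$ shared between blocks, once colored, lets forcing propagate into the next block, and trivial blocks (two-vertex pendant-path blocks) are forced as ordinary pendant paths. The correction term also needs that distinct $S_i,S_j$ intersect \emph{only} in $\mathcal{M}(G)$: two nontrivial blocks meet in at most one vertex (a cut vertex), and if they meet at all, that vertex is in $\mathcal{M}(G)$ and hence in $A_i\cap A_j$; a vertex in no $A_i$ lies in a unique block.

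For the ``$\geq$'' direction, start with a minimum connected power dominating set $R$ of $G$; by Lemma~\ref{MR_lemma}, $\mathcal{M}(G)\subseteq R$, and by Lemma~\ref{no_leaf_lemma}, $R$ contains no leaf of $G$. For each nontrivial block $B_i$, set $R_i=(R\cap V(B_i))\cup A_i$; since $A_i\subseteq\mathcal{M}(G)\subseteq R$, actually $R_i=R\cap V(B_i)$. I would argue $R_i$ is a connected power dominating set of $B_i$ subject to $A_i$: it contains $A_i$ by Lemma~\ref{MR_lemma}; it is connected in $B_i$ because $B_i$ is biconnected, so any cut vertex of $G$ in $B_i$ does not separate $B_i$, and the portion of $R$ inside $B_i$ together with the entry cut vertices (which are in $\mathcal{M}(G)\subseteq R$) must already be connected — more carefully, if $R\cap V(B_i)$ were disconnected within $B_i$, then since the only way $R$ connects two parts of $B_i$ is through $B_i$ itself (a cut vertex of $G$ in $B_i$ can only be re-entered via $B_i$ or via paths that also pass through that same cut vertex), $R$ would be disconnected in $G$, a contradiction. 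Finally $R_i$ power dominates $B_i$: if some vertex $w\in V(B_i)$ were never colored when running the process inside $B_i$ from $R_i$, I'd show it is also never colored in $G$ from $R$ — colorings entering $B_i$ from outside can only arrive through the cut vertices of $G$ in $B_i$, all of which are in $A_i\subseteq R_i$ and hence already colored at the start inside $B_i$; so the process inside $B_i$ from $R_i$ is at least as powerful as the restriction to $B_i$ of the process in $G$. Then $\sum_i|R_i|=\sum_i|R\cap V(B_i)|=|R|+\sum_{v\in\mathcal{M}(G)}(\mu(v)-1)$, since a vertex of $R$ in no $A_i$ lies in exactly one block (it is either a non-cut vertex of a unique block or a cut vertex in $R_1$, but cut vertices of $G$ in a nontrivial block that belong to only one nontrivial block still may have $\mu=1$) — and a cut vertex $v\in\mathcal{M}(G)$ lies in exactly $\mu(v)$ nontrivial blocks. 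Rearranging gives $|R|\geq\sum_i\gamma_{P,c}(B_i;A_i)-\sum_{v\in\mathcal{M}(G)}(\mu(v)-1)$.

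The main obstacle I anticipate is the bookkeeping in the overcounting term, specifically pinning down precisely which vertices are shared between blocks and confirming that every such shared vertex is in $\mathcal{M}(G)$ (so that it is forced into every $R_i$ it touches), together with the claim that a vertex $v$ belongs to exactly $\mu(v)$ nontrivial blocks and that pendant-path vertices and trivial blocks are accounted for correctly by the definition of ``nontrivial block.'' The second delicate point is the power-domination equivalence across the block--cut tree: I must argue rigorously that the global forcing process on $G$ can be simulated block-by-block and vice versa, using that information only crosses between blocks through their shared cut vertices, and that these cut vertices are exactly the $A_i$'s which are colored from the outset in each block's subproblem. The connectivity transfer in both directions also requires a clean statement that $R\cap V(B_i)$ is connected in $B_i$ iff the relevant global object is connected, which follows from the tree structure of the block--cut tree but should be stated carefully.
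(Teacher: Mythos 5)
Your proposal is correct and follows essentially the same route as the paper: the same block-by-block construction $S=\bigcup_i S_i$ via Proposition~\ref{prop_leaf_graph}, the same use of Lemmas~\ref{MR_lemma} and~\ref{no_leaf_lemma}, the same block-tree connectivity argument, and the same $\sum_v(\mu(v)-1)$ overcounting. Your ``$\geq$'' direction just spells out explicitly (restriction of a global solution to each block, simulation of the forcing process through the cut vertices in $A_i$) what the paper's minimality-contradiction step leaves implicit.
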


\begin{proof}
Let $S_i$ be an arbitrary minimum connected power dominating set of $B_i$ subject to $A_i$. By Proposition \ref{prop_leaf_graph}, $S_i$ is also a minimum connected power dominating set of $\ell_3(B_i,A_i)$. We claim that $S:=\cup_{i=1}^k S_i$ is a minimum connected power dominating set of $G$. 

Let $u$ and $v$ be two arbitrary vertices in $S$. Without loss of generality, suppose $u$ and $v$ are not in $\mathcal{M}(G)$; the case when one or both of $u$ and $v$ are in $\mathcal{M}(G)$ is handled analogously. If $u$ and $v$ belong to the same nontrivial block $B_i$, then since $S$ contains $S_i$, and $S_i$ is a connected power dominating set containing $u$ and $v$, there is a path between $u$ and $v$ in $G[S]$. Otherwise, let $B_{i_0}$ and $B_{i_p}$ respectively be the blocks containing $u$ and $v$. Let $B_{i_0},a_{i_1},B_{i_1},\ldots,a_{i_{p-1}},B_{i_{p-1}},a_{i_p},B_{i_p}$ be the path between $B_{i_0}$ and $B_{i_p}$ in the block tree of $G$, where $a_{i_1},\ldots,a_{i_p}$ are the cut vertices; note that by definition, each of these vertices is in $\mathcal{M}(G)$. Then since $S$ contains $S_{i_0}$, and $S_{i_0}$ is a connected power dominating set containing $u$ and $a_{i_1}$, there is a path between $u$ and $a_{i_1}$ in $G[S]$. Likewise, there is a path between $a_{i_j}$ and $a_{i_{j+1}}$ for $1\leq j\leq p-1$, and there is a path between $a_{i_p}$ and $v$. Thus, in all cases, there is a path between $u$ and $v$ in $G[S]$, so $S$ is a connected set. 

Moreover, $S$ is a power dominating set, since by construction each $S_i$ can dominate the corresponding nontrivial block of $G$ (because no uncolored vertex within a nontrivial block has uncolored neighbors outside that block).

Finally, it remains to be shown that $S$ is minimum. Suppose $S'$ is a connected power dominating set of $G$ with $|S'|<|S|$. Thus, there is some $i$ such that $|S'\cap B_i|<|S\cap B_i|=|S_i|$. However, by Lemma \ref{MR_lemma}, $S'$ contains all vertices in $\mathcal{M}(G)$, and hence $S'\cap B_i$ must contain $A_i$; this contradicts the minimality of $S_i$. It follows that $S$ is a minimum connected power dominating set of $G$. Thus, $S$ is a minimum connected power dominating set of $G$. Moreover,

\begin{eqnarray*}
|S|&=&\left|\bigcup_{i=1}^k S_i\right|=\left|\bigcup_{i=1}^k ((S_i\backslash A_i)\cup A_i)\right|=\left|\bigcup_{i=1}^k (S_i\backslash A_i)\cup \bigcup_{i=1}^k A_i\right|=\\
&=&\sum_{i=1}^k|S_i\backslash A_i| + \left|\bigcup_{i=1}^k A_i\right|=\sum_{i=1}^k|S_i|-\sum_{i=1}^k|A_i| + \left|\bigcup_{i=1}^k A_i\right|=\\
&=&\sum_{i=1}^k\gamma_{P,c}(\ell_3(B_i,A_i))-\sum_{v\in \mathcal{M}(G)}|\mu(v)| + |\mathcal{M}(G)|.
\end{eqnarray*}
Thus, $\gamma_{P,c}(G)=|S|$ is as claimed in (\ref{eq_split}).
\end{proof}

\subsection{Vertex and edge operations}

In this section, we explore the effects of various vertex and edge operations on the connected power domination number. The effects of operations such as deletion and contraction of vertices and edges have been studied for other parameters as well. For example, the \textit{power domination spread} of a vertex $v$ and edge $e$ is defined as $\gamma_P(G;v) = \gamma_P(G) - \gamma_P (G - v)$ and $\gamma_P(G;e) = \gamma_P(G) - \gamma_P (G - e)$, respectively. 
%It has been shown in \cite{benson, chang2012} that $-\infty<\gamma_P(G;v)\leq 1$ and $-1\leq \gamma_P(G;e)\leq 1$. 
The analogously defined \textit{rank spread} $r$, \textit{zero forcing spread} $z$, \emph{path spread} $p$, and \emph{connected forcing spread} $z_c$ have also been defined as parameters which respectively describe the change in the minimum rank, zero forcing number, path cover number, and connected forcing number of a graph when a vertex $v$ or edge $e$ is deleted. In particular, it has been shown that: 

%$-1\leq z(G;v)\leq 1$, $-1\leq z(G;e)\leq 1$ \cite{Edholm,Huang}, $0\leq r(G;v)\leq 2$ \cite{barioli_minrank,nylen}, $-1\leq r(G;e)\leq 1$ \cite{barioli_minrank,nylen}, $-1 \leq p(G;v) \leq 1$ \cite{barioli_minrank,barioli_pathcover}, $-1 \leq p(G;e) \leq 1$ \cite{row_cacti}, 
%$-\infty<z_c(G;v)<\infty$, and $-\infty< z_c(G;e)<\infty$ \cite{brimkov_comp_and_complex}.
\vspace{-8pt}
\begin{center}
\noindent\begin{minipage}{.35\linewidth}
\begin{eqnarray*}
-\infty<&\hspace{-7pt}\gamma_P(G;v)&\hspace{-7pt} \leq 1\\
-1 \leq &\hspace{-7pt}z(G;v)&\hspace{-7pt}  \leq 1\\
0 \leq &\hspace{-7pt} r(G;v)&\hspace{-7pt}  \leq 2\\
-1 \leq &\hspace{-7pt} p(G;v)&\hspace{-7pt}  \leq 1\\
-\infty<&\hspace{-7pt} z_c(G;v)&\hspace{-7pt} <\infty
\end{eqnarray*}
\end{minipage}%
\begin{minipage}{.18\linewidth}
\begin{eqnarray*}
-1\leq &\hspace{-7pt}\gamma_P(G;e)&\hspace{-7pt}\leq 1\\
-1 \leq &\hspace{-7pt}z(G;e)&\hspace{-7pt}  \leq 1\\
-1 \leq &\hspace{-7pt} r(G;e)&\hspace{-7pt}  \leq 1\\
-1 \leq &\hspace{-7pt} p(G;e)&\hspace{-7pt}  \leq 1\\
-\infty<&\hspace{-7pt} z_c(G;e)&\hspace{-7pt} <\infty
\end{eqnarray*}
\end{minipage}
\begin{minipage}{.2\linewidth}
\begin{eqnarray*}
&&\text{\cite{benson, chang2012}},\\
&&\text{\cite{Edholm,Huang}},\\
&&\text{\cite{barioli_minrank,nylen}},\\
&&\text{\cite{barioli_minrank,
barioli_pathcover,row_cacti}},\\
&&\text{\cite{brimkov_comp_and_complex}}.
\end{eqnarray*}
\end{minipage}
\end{center}
\vspace{8pt}

We now define the \textit{connected power domination spread} of a vertex $v$ and edge $e$ as $\gamma_{P,c}(G;v) = \gamma_{P,c}(G) - \gamma_{P,c} (G - v)$ and $\gamma_{P,c}(G;e) = \gamma_{P,c}(G) - \gamma_{P,c} (G - e)$, respectively. Since a disconnected graph cannot have a connected power dominating set, in the definitions above we will restrict $v$ to be a non-cut vertex and $e$ to be a non-cut edge. We now show that the connected power domination spread of a vertex or edge can be arbitrarily large.

\begin{proposition} \label{prop:edge_sub}
For any integer $c > 0$, there exist graphs $G_1$, $G_2$, $G_3$, and $G_4$, vertices $v_1\in G_1$ and $v_2 \in G_2$, and edges $e_3\in G_3$ and $e_4 \in G_4$, such that $\gamma_{P,c}(G_1;v_1) = -c$, $\gamma_{P,c}(G_2;v_2) = c$, $\gamma_{P,c}(G_3;e_3) = -c$, and $\gamma_{P,c}(G_4;e_4) = c$.

\end{proposition}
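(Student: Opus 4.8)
The plan is to construct four explicit families of graphs, each parametrized by $c$, in which deleting a single non-cut vertex (respectively non-cut edge) changes the connected power domination number by exactly $\pm c$. The main tool throughout is the cut-vertex decomposition (Theorem~\ref{theorem_blocks}) together with Lemma~\ref{MR_lemma}, which forces every vertex of $\mathcal{M}(G)$ into every connected power dominating set: by making the deletion create or destroy a long path between two ``forced'' high-degree vertices, I can make $\gamma_{P,c}$ jump by an amount controlled by the length of that path.

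For $\gamma_{P,c}(G_1;v_1)=-c$ (deletion \emph{increases} the parameter): I would start from a ``spider''-like or theta-like graph built so that $G_1$ has a short connected power dominating set, but $G_1-v_1$ becomes (essentially) a long path or a graph whose $\mathcal{M}$-set is spread out along a path of length $\Theta(c)$. Concretely, take two vertices $x,y$ each made into a member of $\mathcal{M}$ by attaching three pendant leaves (so by Proposition~\ref{prop_leaf_graph} and Lemma~\ref{MR_lemma} they must lie in any c.p.d.s.), join $x$ and $y$ by two internally disjoint paths of length roughly $c$, and let $v_1$ be an internal vertex of one of those paths; in $G_1$ the two $x$--$y$ paths give a short theta that can be power-dominated cheaply, but $G_1-v_1$ leaves a single long $x$--$y$ path, and since both $x$ and $y$ are forced, by the path example ($\gamma_{P,c}(P_n;\{v_i,v_j\})=|i-j|+1$) the whole path must be taken, costing $\Theta(c)$ more. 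The edge version $\gamma_{P,c}(G_3;e_3)=-c$ is the same idea with $e_3$ a single edge on one of the two parallel paths.

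For $\gamma_{P,c}(G_2;v_2)=c$ and $\gamma_{P,c}(G_4;e_4)=c$ (deletion \emph{decreases} the parameter): I would reverse the construction — build $G_2$ so that it contains a long mandatory path forced by two $\mathcal{M}$-vertices $x,y$ that are far apart, with $v_2$ a non-cut vertex whose removal ``short-circuits'' the structure. For instance, attach a length-$c$ path from $x$ to $v_2$ and a second length-$c$ path from $y$ to $v_2$, with $v_2$ additionally adjacent to a high-degree gadget so that $v_2$ is not a cut vertex and not itself forced; in $G_2$ all of $\mathcal{M}(G_2)$ is forced and the connecting structure is expensive, while in $G_2-v_2$ the two long paths collapse (become pendant paths attached to $x$ and $y$, hence not needed in a minimum c.p.d.s. by the logic of Lemma~\ref{no_leaf_lemma} and Observation~\ref{obs_leaves}), saving $\Theta(c)$. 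Again the edge case uses a single edge $e_4$ whose removal turns a cycle-segment into two pendant paths.

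The main obstacle will be verifying that the designated $v_i$ and $e_j$ are genuinely non-cut (so that the spreads are even defined) while simultaneously controlling $\gamma_{P,c}$ of both $G$ and $G-v$ precisely — not just up to additive constants. I expect to handle this by choosing each $G_i$ with a single nontrivial block (or a very simple block tree) so that Theorem~\ref{theorem_blocks} reduces the computation to one or two blocks, and by padding with triples of leaves so that Lemma~\ref{MR_lemma} pins down $\mathcal{M}(G)$ exactly; the path example then gives the c.p.d.\ number on the nose. Tuning the additive offsets so the difference is exactly $c$ (rather than $c$ plus a constant) is routine once the $\Theta(c)$ behavior is in place, and can be absorbed by taking the parallel/auxiliary paths of the right length.
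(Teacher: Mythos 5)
There is a genuine gap, and it sits in the constructions themselves rather than in the lemmas you invoke. For the negative spreads ($G_1$, $G_3$): as written, your theta construction gives spread $0$, not $-c$. If $x$ and $y$ each carry three leaves and are joined by two internally disjoint paths \emph{both} of length roughly $c$, then Lemma~\ref{MR_lemma} puts $x,y$ in every connected power dominating set, and connectivity already forces all internal vertices of one of the two paths into the set, so $\gamma_{P,c}(G_1)\approx c$ \emph{before} any deletion; after deleting an internal vertex $v_1$ of either path, the surviving path's internal vertices become $R_2$ cut vertices while the broken path's remnants become free pendant paths, so $\gamma_{P,c}(G_1-v_1)\approx c$ as well. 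Your claim that ``the two $x$--$y$ paths give a short theta that can be power-dominated cheaply'' is true for $\gamma_P$ but false for $\gamma_{P,c}$. The repair is not mere ``tuning of additive offsets'': one of the two connections must have constant length and $v_1$ (resp.\ $e_3$) must lie on that \emph{short} connection, so that its deletion reroutes the mandatory $x$--$y$ connection through the long path.

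The positive spreads ($G_2$, $G_4$) are worse: the sketch is internally inconsistent. If the two length-$c$ paths from $x$ and from $y$ meet only at $v_2$, then $v_2$ is a cut vertex and the spread is not even defined. If instead the ``high-degree gadget'' supplies an alternative $x$--$y$ connection avoiding $v_2$, then the internal vertices of the long paths lie on a cycle, are not cut vertices, hence are not in $\mathcal{M}(G_2)$, and are not needed in a minimum connected power dominating set of $G_2$ either (they form a feasible segment, dominated at its ends and forced inward, cf.\ Lemma~\ref{two_ap_lemma}); so $\gamma_{P,c}(G_2)$ is already small and deleting $v_2$ saves only $O(1)$. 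With mandatory anchors at \emph{both} far ends, the long stretch is either mandatory both before and after the deletion or superfluous both before and after; either way the spread collapses. The mechanism actually needed---and the one the paper uses---is a long stretch of $R_2$ cut vertices anchored at only one end by a vertex of $\mathcal{M}$ and capped at the other end by a small non-path gadget (a triangle) containing the non-cut vertex (or edge) to be deleted: the deletion turns the cap into a path continuation, the whole stretch becomes a pendant path and drops out of $\mathcal{M}$, and $\gamma_{P,c}$ falls by $c$. The paper in fact extracts all four spreads from one unicyclic graph, verifying $\gamma_{P,c}(G)=1$, $\gamma_{P,c}(G-y)=\gamma_{P,c}(G-xy)=c+1$, and $\gamma_{P,c}(G-y-x)=\gamma_{P,c}(G-xy-xv_2)=1$ directly, with no need for Theorem~\ref{theorem_blocks}.
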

\begin{proof}
Let $P=(V,E)$ be a path on $c+3$ vertices, with end-vertices $v_1$ and $v_n$. Let $v_2$ and $v_{n-1}$ be the neighbors of $v_1$ and $v_n$, respectively. Let $G=(V\dot\cup\{x,y,z\}, E\dot\cup \{xv_1,xv_2,xy,yz,zv_{n-1}\})$; see Figure \ref{figure_op_constructions}, left, for an illustration. 

It can be verified that $\gamma_{P,c}(G)=1$, $\gamma_{P,c}(G-y)=c+1$, and $\gamma_{P,c}(G-y-x)=1$. Thus, $\gamma_{P,c}(G;y)=-c$ and $\gamma_{P,c}(G-y;x)=c$. Similarly, it can be verified that $\gamma_{P,c}(G-xy)=c+1$, and $\gamma_{P,c}(G-xy-xv_2)=1$. Thus, $\gamma_{P,c}(G;xy)=-c$ and $\gamma_{P,c}(G-xy;xv_2)=c$.
\end{proof}

From Proposition \ref{prop:edge_sub}, it follows that the connected power domination number can be arbitrarily increased or decreased with the addition or deletion of a single vertex or edge. Furthermore, the connected power domination number is not monotone with respect to either operation. We next show that the connected power domination number can be arbitrarily increased or decreased with the contraction of an edge, and it can be arbitrarily increased, but not decreased, with the subdivision of an edge. Given a graph $G$ and edge $e$ in $G$, let $G:e$ denote the graph obtained by subdividing $e$ in $G$.

\begin{figure}[ht!]
\begin{center}
\includegraphics[scale=.3]{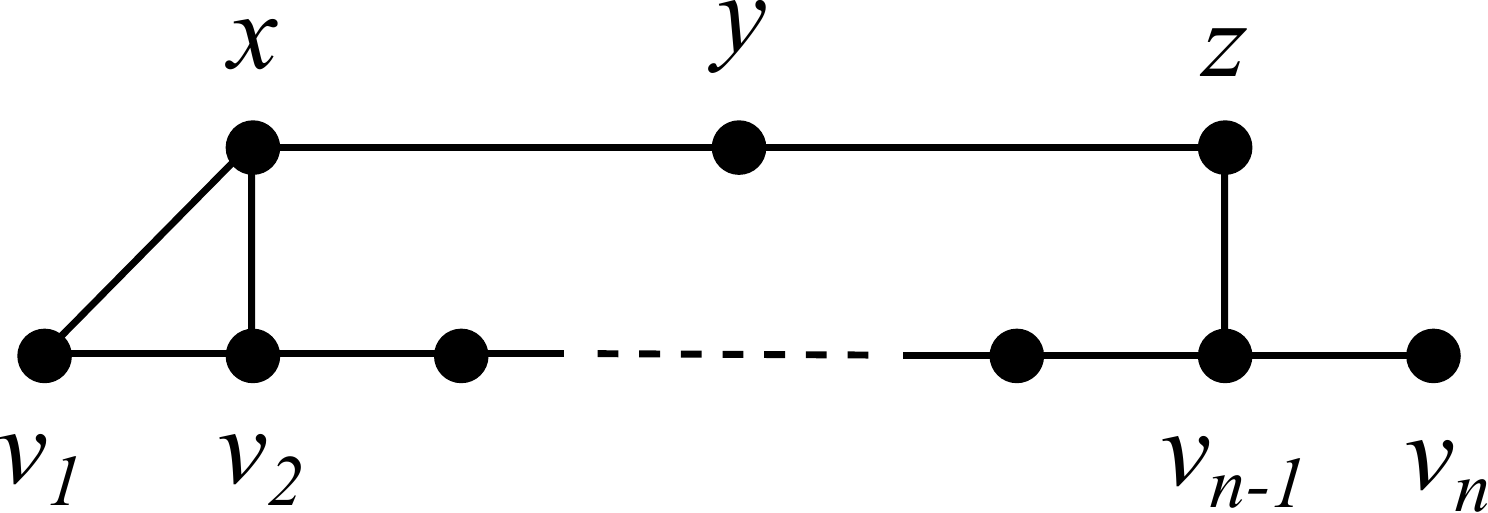}\qquad\qquad
\includegraphics[scale=.3]{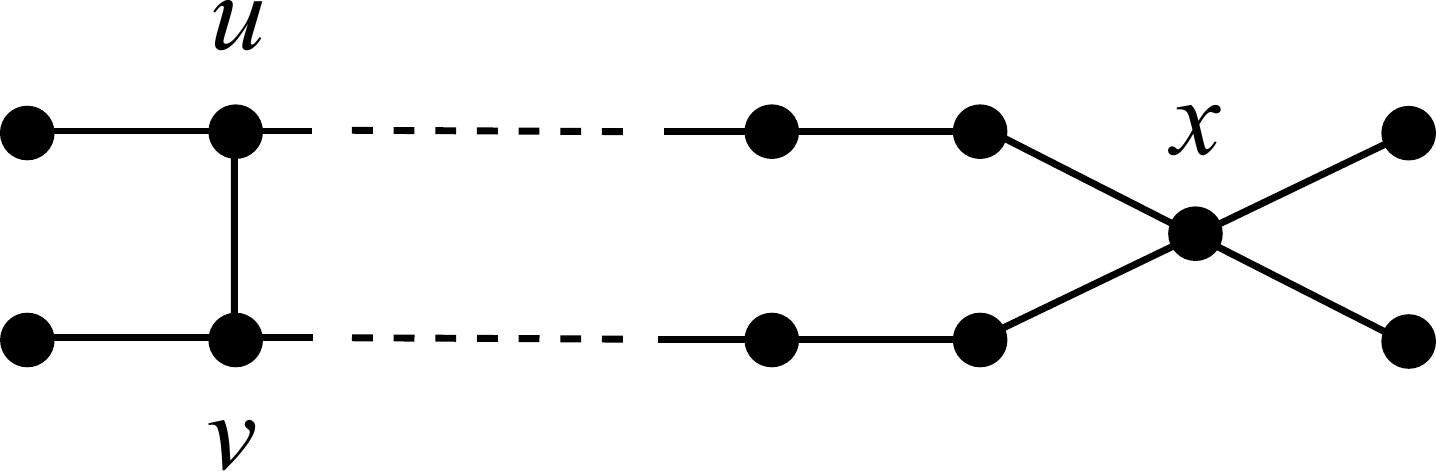}
\caption{Deleting a vertex or edge, and contracting an edge, can change the connected power domination number arbitrarily. Subdividing an edge can arbitrarily increase, but not decrease, the connected power domination number.}
\label{figure_op_constructions}
\end{center}
\end{figure}

\begin{proposition}
For any integer $c>0$, there exist graphs $G_1$, $G_2$, and $G_3$ and edges $e_1\in G_1$, $e_2\in G_2$, $e_3 \in G_3$, such that $\gamma_{P,c}(G_1:e_1) - \gamma_{P,c}(G_1) = c$, $\gamma_{P,c}(G_2) - \gamma_{P,c}(G_2/e_2) = -c$, and $\gamma_{P,c}(G_3) - \gamma_{P,c}(G_3/e_3) = c$.
\end{proposition}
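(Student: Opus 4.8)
The plan is to exhibit three explicit small families of graphs, each depending on the parameter $c$, and verify the claimed spread by direct computation of the relevant connected power domination numbers; this mirrors the style of the proof of Proposition \ref{prop:edge_sub}. The structures will be variations on the ``long path with a short shortcut'' gadget used there, since that gadget already demonstrates how a single $P$MU can exploit a shortcut to force an entire long path, while destroying the shortcut forces one to pay for the whole path.

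For the subdivision claim $\gamma_{P,c}(G_1{:}e_1)-\gamma_{P,c}(G_1)=c$, I would take $G_1$ to be a graph in which some edge $e_1$ lies on a short alternate route parallel to a long path of length about $c$, arranged so that $\gamma_{P,c}(G_1)=1$ (a single vertex near the junction forces everything through the short route) but subdividing $e_1$ lengthens that route enough that the cheapest connected power dominating set must instead be an interval of the long path, of size $c+1$; hence the difference is $c$. Concretely one can reuse the graph $G$ from Proposition \ref{prop:edge_sub} or a close relative: there $\gamma_{P,c}(G)=1$ and $\gamma_{P,c}(G-xy)=c+1$, and a similar phenomenon occurs when $xy$ is subdivided rather than deleted, because an extra subdivision vertex on the $x$--$y$--$z$ bridge gives $x$ (or any single colored vertex) two uncolored neighbors at a critical moment, breaking the single forcing chain. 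For the two contraction claims, I would build $G_2$ so that contracting $e_2$ \emph{merges} two vertices whose separation was forcing a large interval into the dominating set, collapsing $\gamma_{P,c}$ from $c+1$ down to $1$ (giving spread $-c$), and build $G_3$ so that contracting $e_3$ instead \emph{creates} a high-degree vertex that destroys a shortcut, pushing $\gamma_{P,c}$ up from $1$ to $c+1$ (giving spread $c$); again small modifications of the Figure \ref{figure_op_constructions} gadgets suffice, and the right-hand picture in that figure is presumably exactly the intended $G_2,G_3$.

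The verification in each case has the same two ingredients: an upper bound, given by exhibiting one explicit connected power dominating set of the claimed size and checking it is connected and propagates (tracking rule 1 once and then rule 2 along the long path and the short route), and a matching lower bound, for which I would invoke Lemma \ref{MR_lemma} to pin down which cut vertices (the set $\mathcal{M}$) must lie in any connected power dominating set, then argue that any connected set containing $\mathcal{M}$ of smaller size leaves some long pendant-path-like segment unforced because a single forcing chain cannot enter it. Lemma \ref{no_leaf_lemma} is also useful to rule out wasting a vertex on a leaf.

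The main obstacle is the lower-bound argument after subdivision: one must rule out \emph{all} connected power dominating sets of size $\le c$, not just the ``obvious'' ones, and in particular show that no clever connected set of size $2$ or $3$ straddling the junction can still force the lengthened shortcut together with the long path. I expect to handle this by a case analysis on where a hypothetical small connected set meets the junction vertex and each of the two parallel routes, using the fact that at the moment rule 2 first applies, every colored vertex on the long path other than the endpoints of the colored interval has two uncolored neighbors, so propagation can only proceed from the two ends of a single interval — which forces that interval to already have length $\ge c+1$. Once this is set up for the subdivision case, the two contraction cases are analogous and shorter, since contraction only changes the graph locally and the same interval-counting argument applies to the modified graph.
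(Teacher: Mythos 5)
Your overall plan (explicit gadgets, then upper bounds by exhibiting a set and lower bounds via Lemmas \ref{MR_lemma} and \ref{no_leaf_lemma}) matches the paper in spirit, but the only construction you actually commit to is wrong, and the mechanism behind it is unsound. In the gadget of Proposition \ref{prop:edge_sub}, subdividing $xy$ does \emph{not} raise $\gamma_{P,c}$: the new vertex has degree $2$, $x$ keeps degree $3$, and $\{x\}$ is still a connected power dominating set --- after the domination step colors $v_1,v_2$ and the subdivision vertex, one chain runs through the subdivided bridge to $y$, $z$, $v_{n-1}$ and another runs $v_2\to v_3\to\cdots$, so $\gamma_{P,c}(G:xy)=1$, not $c+1$. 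Your stated reason (``an extra subdivision vertex gives $x$ two uncolored neighbors at a critical moment'') does not happen, and more generally the idea that ``subdividing lengthens the short route, so one must pay for the long path'' cannot work: a forcing chain traverses a path of any length at no cost, so merely lengthening a route never increases $\gamma_{P,c}$. What is needed is a subdivision that creates a \emph{deadlock} where chains arrive at vertices retaining two uncolored neighbors. The paper's single gadget does this: take a cycle on $2c+1$ vertices, attach two leaves to a vertex $x$ and one leaf to each of the two adjacent vertices $u,v$ at maximum distance from $x$. Then $\{x\}$ power dominates ($u$ and $v$ are colored by the two chains and then each frees the other's leaf), but in $G:uv$ both $u$ and $v$ are stuck with two uncolored neighbors (their leaf and the new vertex), and Lemma \ref{MR_lemma} plus connectedness forces any connected power dominating set to stretch from $x$ to the neighborhood of $u$ or $v$, giving $c+1$; the same gadget settles both contraction claims, since $G/uv$ has two $R_3$-vertices at distance $c$ (so $\gamma_{P,c}(G/uv)=c+1$), and contracting the subdivision edge of $G:uv$ recovers $G$.

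Two further problems with the write-up as it stands. First, your sign assignments are swapped: $\gamma_{P,c}(G_2)-\gamma_{P,c}(G_2/e_2)=-c$ requires contraction to \emph{increase} the number (the ``contraction creates a high-degree obstruction'' scenario), while $G_3$ requires the decrease; this is only a relabeling, but it shows the verification was not carried out. Second, for $G_2$ and $G_3$ you never specify graphs at all --- ``the right-hand picture is presumably the intended gadget'' is a guess about a figure whose content you do not describe --- so as written no one of the three claims is actually established. The interval-counting lower-bound argument you sketch is reasonable and close to what one would write out, but it needs a correct gadget to apply to.
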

\begin{proof}
Let $C=(V,E)$ be a cycle on $2c+1$ vertices, let $uv$ be an edge of $C$, and let $x$ be a vertex of $C$ at maximum distance from $u$ and $v$. Let $G=(V\dot\cup\{\ell_1,\ell_2,\ell_3,\ell_4\},E\dot\cup\{\ell_1x,\ell_2x,\ell_3u,\ell_4v\})$; see Figure \ref{figure_op_constructions}, right, for an illustration. 

It can be verified that $\gamma_{P,c}(G)=1$, $\gamma_{P,c}(G:uv)=c+1$, and $\gamma_{P,c}(G/uv)=c+1$. Thus, $\gamma_{P,c}(G:uv) - \gamma_{P,c}(G) = c$, $\gamma_{P,c}(G) -\gamma_{P,c}(G/uv) = -c$, and $\gamma_{P,c}(G:uv)-\gamma_{P,c}((G:uv)/uz)=c$, where $z$ is the new vertex introduced after subdividing edge $uv$ in $G:uv$.
\end{proof}

\begin{proposition}
Let $G$ be a connected graph and $uv$ be an edge in $G$. Then $\gamma_{P,c}(G:uv) \geq \gamma_{P,c}(G)$, and this bound is tight.
\end{proposition}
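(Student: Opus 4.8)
The plan is to show that any connected power dominating set $S$ of $G:uv$ of minimum size yields a connected power dominating set of $G$ of size at most $|S|$, which immediately gives $\gamma_{P,c}(G) \leq \gamma_{P,c}(G:uv)$. Denote by $z$ the new vertex introduced when subdividing $uv$, so that in $G:uv$ we have $u \sim z \sim v$ in place of the edge $uv$. First I would take a minimum connected power dominating set $S$ of $G:uv$ and split into cases according to whether $z \in S$.

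If $z \notin S$, the natural candidate is to use $S$ itself as a vertex set in $G$ (note $S \subseteq V(G)$ in this case). I would argue that $G[S]$ is connected: any path in $(G:uv)[S]$ avoiding $z$ is already a path in $G[S]$, and since $z \notin S$, connectivity of $(G:uv)[S]$ transfers directly. Then I would check that $S$ power dominates $G$: intuitively, adding the edge $uv$ back (equivalently, contracting the path $u$–$z$–$v$) only helps propagation, since any force carried out in $G:uv$ can be mimicked in $G$, and the vertex $z$ that needed to be colored no longer exists. One subtlety: a force performed \emph{from} $z$ in $G:uv$ must be re-realized in $G$ — but if $z \notin S$, then $z$ was colored during the process, and at the timestep $z$ forces its unique uncolored neighbor (one of $u$, $v$), the other of $u,v$ was already colored; in $G$, the vertex doing that force is whichever of $u,v$ colored $z$, and it still has the same unique uncolored neighbor. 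I would make this precise by induction on the chronological list of forces.

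If $z \in S$, then by Lemma \ref{no_leaf_lemma} (applied when $G:uv$ is not a path; the path case is trivial since subdividing a path gives a path and $\gamma_{P,c}(P_n) = 1$) $z$ is not a leaf, which it isn't anyway since $d(z) = 2$. Here $S$ is connected in $G:uv$ and contains $z$; since $z$'s only neighbors are $u$ and $v$, and $|S| \geq 2$ would be needed unless $G:uv$ has very few vertices, connectivity forces at least one of $u,v$ to be in $S$. I would replace $z$ by $u$ (if $u \notin S$) or simply delete $z$ (if both $u,v \in S$, or if $u \in S$): the set $S' = (S \setminus \{z\}) \cup \{u\}$ has $|S'| \leq |S|$, lies in $V(G)$, induces a connected subgraph of $G$ (the edge $uv$ now bridges what $z$ bridged, and $u$ inherits $z$'s role), and power dominates $G$ by a similar force-tracking argument. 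Combining the two cases gives the inequality.

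For tightness, I would exhibit a family — or even a single example — where subdividing an edge does not change $\gamma_{P,c}$. A path $P_n$ works only loosely since $\gamma_{P,c}(P_n)=\gamma_{P,c}(P_{n+1})=1$, but the proposition as stated presumably means tight over connected graphs in general; a clean choice is $G = P_n$ with an edge $e$ not incident to a degree-$3$-or-higher vertex (there are none), so in fact I would just note $\gamma_{P,c}(P_n : e) = 1 = \gamma_{P,c}(P_n)$ realizes equality, or alternatively take a star $K_{1,t}$ with a pendant edge subdivided, checking both sides equal $1$. The main obstacle I anticipate is the bookkeeping in the force-tracking argument: carefully verifying that every rule-2 force in $G:uv$ has a faithful counterpart in $G$ when $z$ is removed or absorbed, particularly the step where $z$ itself forces or is forced, and ensuring no force in $G:uv$ relied on $z$ being uncolored in a way that breaks in $G$. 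This is routine but needs a clean inductive statement on chronological lists of forces.
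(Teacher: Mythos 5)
Your proposal is correct and follows essentially the same route as the paper: take a minimum connected power dominating set of $G:uv$, show that it (or the set obtained by dropping the subdivision vertex, possibly swapping it for $u$) is connected and power dominates $G$ by reassigning any force through the subdivision vertex to whichever of $u,v$ colored it, and certify tightness by subdividing an edge of a path. The paper organizes the cases by which of $u,v$ lie in the set rather than by whether the new vertex does, but the argument is the same.
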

\begin{proof}
Let $w$ be the new vertex introduced after subdividing edge $uv$ in $G:uv$. Let $R$ be a minimum connected power dominating set of $G:uv$ and fix a chronological list of forces associated with $R$. We will show that $R$ or $R\backslash \{w\}$ is also a connected power dominating set of $G$. 

Suppose first that neither $u$ nor $v$ is contained in $R$. Since $R$ is connected, $w$ cannot be in $R$; thus, $R$ remains connected in $G$. Moreover, in $G:uv$, $w$ is either forced by $u$ or $v$, say $u$, and $v$ is forced either by $w$ or by some other vertex. The same chronological list of forces remains valid in $G$, except that if $w$ forces $v$ in $G:uv$, $u$ forces $v$ in $G$. Thus, $R$ is a connected power dominating set of $G$. Next, suppose that exactly one of $u$ and $v$, say $u$, is contained in $R$. Then $R$ remains connected in $G$, $u$ can dominate $v$ in $G$, and all other forces associated with $R$ in $G:uv$ remain valid in $G$. Thus, $R$ is a connected power dominating set in $G$. Finally, suppose that both $u$ and $v$ are contained in $R$. If $w$ is also contained in $R$, then $R\backslash \{w\}$ is a connected set in $G$, $u$ can dominate $v$ in $G$, and all other forces associated with $R$ in $G:uv$ remain valid in $G$. Similarly, if $w$ is not contained in $R$, then $R$ remains connected in $G$, and all forces associated with $R$ in $G:uv$ remain valid in $G$. It follows that $\gamma_{P,c}(G)\leq \gamma_{P,c}(G:uv)$. This bound holds with equality, e.g., when any edge of a path $P_n$ is subdivided.
\end{proof}

\section{NP-completeness of connected power domination}

In this section, we show that computing the connected power domination number of a graph is NP-complete. To begin, we state the decision version of this problem.\\

\noindent PROBLEM: Connected power domination ($CPD$)\\
INSTANCE: A simple undirected connected graph $G=(V,E)$ and a positive integer $k\leq |V|$.\\
QUESTION: Does $G$ contain a power dominating set $S$ of size at most $k$ such that $G[S]$ is connected?

\begin{theorem}
\label{np_theorem}
$CPD$ is NP-complete.
\end{theorem}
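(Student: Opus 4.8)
The plan is to prove membership in NP and then give a polynomial-time reduction from a known NP-complete problem.

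\medskip

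\textbf{Membership in NP.} Given a candidate set $S$, one can verify in polynomial time that $|S|\le k$, that $G[S]$ is connected, and that $S$ is a power dominating set. The last check is done by simulating the color change rules: apply rule 1) once, then repeatedly scan all colored vertices for one with a unique uncolored neighbor and perform that force; this halts after at most $n$ rounds, each taking polynomial time. So $CPD\in$ NP.

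\medskip

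\textbf{The reduction.} I would reduce from a problem already known to be NP-complete for ordinary power domination or zero forcing; the natural choice is to adapt the classical reduction of Haynes et al.\ from \textsc{3-SAT} (or \textsc{Vertex Cover}, or \textsc{Exact Cover by 3-Sets}) that establishes NP-hardness of power domination, and then ``connectify'' the instance. The key gadget trick is the one suggested by Proposition~\ref{prop_leaf_graph} and Lemma~\ref{MR_lemma}: attaching pendant structures and cut vertices to force which vertices must lie in every connected power dominating set. Concretely, starting from a graph $H$ used to encode an instance of the base problem, I would (i) attach three leaves to each vertex of $H$ that is ``allowed'' to be selected, so that by Proposition~\ref{prop_leaf_graph} the connected power dominating sets of the new graph correspond to power dominating sets of $H$ subject to some set $X$; and (ii) add a connector gadget---e.g.\ a path or star linking designated vertices through new cut vertices---so that $\mathcal{M}$ of the resulting graph is exactly the set of ``mandatory'' connector vertices, forcing any connected power dominating set to contain a cheap connected ``spine'' plus a choice of the encoding vertices. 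Then $H$ has a power dominating set of size $\le k'$ satisfying the required membership constraints if and only if the constructed graph $G$ has a connected power dominating set of size $\le k$ for an appropriate $k=k'+(\text{spine size})$. Both directions of the equivalence reduce to the block/leaf machinery: Lemma~\ref{MR_lemma} gives ``$\Leftarrow$'' (any connected PDS must contain the spine, so its restriction to $H$ is a PDS of $H$ of the right size), and for ``$\Rightarrow$'' one takes a PDS of $H$, adds the spine, and checks connectivity and that the propagation still completes because the pendant leaves and spine vertices are dominated in the first timestep.

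\medskip

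\textbf{Main obstacle.} The hard part is designing the connector gadget so that it simultaneously (a) makes $G$ connected, (b) does not create new propagation shortcuts that would let a smaller set power dominate $G$ (in particular, the spine vertices must not be able to ``leak'' forces past the cut vertices into the encoding part in a way that bypasses the intended structure), and (c) adds a controlled, instance-independent number of vertices to $k$. Keeping the gadget's contribution to $\mathcal{M}(G)$ exactly equal to the intended spine---no more, no fewer mandatory cut vertices---is the delicate bookkeeping step, and it is where I would expect to spend the most care, likely by routing all connections through degree-$2$ cut vertices on short paths and invoking the $R_1,R_2,R_3$ classification to certify exactly which vertices are mandatory.
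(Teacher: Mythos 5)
Your NP-membership argument is fine, but the hardness half is a plan rather than a proof, and the plan's central step is exactly what you defer in your ``main obstacle'' paragraph: no gadget is actually constructed and no equivalence is actually proved. Two concrete problems with the plan itself. First, step (i) misreads Proposition~\ref{prop_leaf_graph}: that proposition relates minimum \emph{connected} power dominating sets of $\ell_3(H,X)$ to minimum \emph{connected} power dominating sets of $H$ subject to $X$; it does not let you pass from connected power dominating sets of the padded graph to ordinary (possibly disconnected) power dominating sets of $H$, which is what you would need in order to piggyback on the known NP-hardness of ordinary power domination via the Haynes et al.\ reductions. Second, step (ii) asks for a connector ``spine'' of instance-independent size that connects whatever solution is chosen inside $H$; but in the hard instances produced by the 3-SAT/VC/X3C reductions the optimal power dominating sets are scattered, and connecting an arbitrary scattered subset requires a Steiner-type structure whose size depends on \emph{which} vertices are chosen --- unless all candidate vertices are made adjacent to a common hub, in which case you must then rule out the new propagation shortcuts the hub creates. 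That is precisely the delicate point, and it is left unresolved.

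For comparison, the paper avoids both issues by reducing from zero forcing rather than from power domination: each vertex $v_i$ of the ZF instance $G$ is attached, via a private vertex $u_i$, to two paths of length $n$ whose far ends $p_{i,n}$ and $q_{i,n}$ are adjacent to a single hub $v^*$; the hub carries two leaves and each $p_{i,n}$ carries an extra pendant $w_i$, and the budget is $k+1$. Lemma~\ref{MR_lemma} forces $v^*$ into every connected power dominating set, Lemma~\ref{no_leaf_lemma} excludes the leaves, and the budget $k+1\leq n+1$ together with the length-$n$ paths prevents the set from reaching $V(G)\cup\{u_i\}$; after a cleanup argument the solution is exactly $v^*$ plus a choice of hub-adjacent vertices $p_{i,n}$, and that choice works if and only if the corresponding $\{v_i\}$ is a zero forcing set of $G$ (the $q$-paths deliver the $u_i$'s for free, while the $p$-paths deliver $v_i$ only when $p_{i,n}$ was selected). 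The long paths are exactly what prevent the hub from creating shortcuts --- the ingredient missing from your sketch. So while your general strategy (a mandatory hub certified by the cut-vertex/leaf lemmas, with candidate vertices adjacent to it) points in the right direction, the proof as written has a genuine gap: the gadget and both directions of the equivalence still have to be supplied.
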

\begin{proof}
Given a graph $G=(V,E)$ and a set $S\subset V$, clearly it can be verified in polynomial time that $S$ is a power dominating set, that $G[S]$ is connected, and that $|S|\leq k$. Thus, $CPD$ is in NP.

For our reduction, we will use the problem of zero forcing, which was proved to be NP-complete in \cite{aazami,fast_mixed_search}. The decision version of zero forcing is stated below. \\

\noindent PROBLEM: Zero forcing ($ZF$)\\
INSTANCE: A simple undirected graph $G=(V,E)$ and a positive integer $k\leq |V|$.\\
QUESTION: Does $G$ contain a zero forcing set $S$ of size at most $k$?\\

Next, we construct a transformation $f$ from $ZF$ to $CPD$.
Let $I=\langle G,k \rangle$ be an instance of $ZF$, where $G=(V,E)$ and $V=\{v_1,\ldots,v_n\}$. We define $f(I)=\langle G',k+1\rangle$, where 
\begin{eqnarray*}
G'&=&(V\cup\{v^*,\ell_1,\ell_2\}\cup\{u_i,w_i:1\leq i\leq n\}\cup\{p_{i,j},q_{i,j}:1\leq i,j\leq n\},\\
&&E\cup\{\{v_i,u_i\},\{p_{i,1},u_i\},\{q_{i,1},u_i\},\{p_{i,n},w_i\},\{p_{i,n},v^*\},\{q_{i,n},v^*\}:1\leq i\leq n\}\cup\\
&&\{\{p_{i,j},p_{i,j+1}\},\{q_{i,j},q_{i,j+1}\}:1\leq j\leq n-1, 1\leq i\leq n\}\cup\{\{v^*,\ell_1\},\{v^*,\ell_2\}\}).
\end{eqnarray*}
Clearly, $G'$ can be constructed from $G$ in polynomial time, so $f$ is a polynomial transformation. See Figure \ref{fig_cf_complexity} for an illustration of $G$ and $G'$.

\begin{figure}[ht!]
\begin{center}
\includegraphics[scale=0.35]{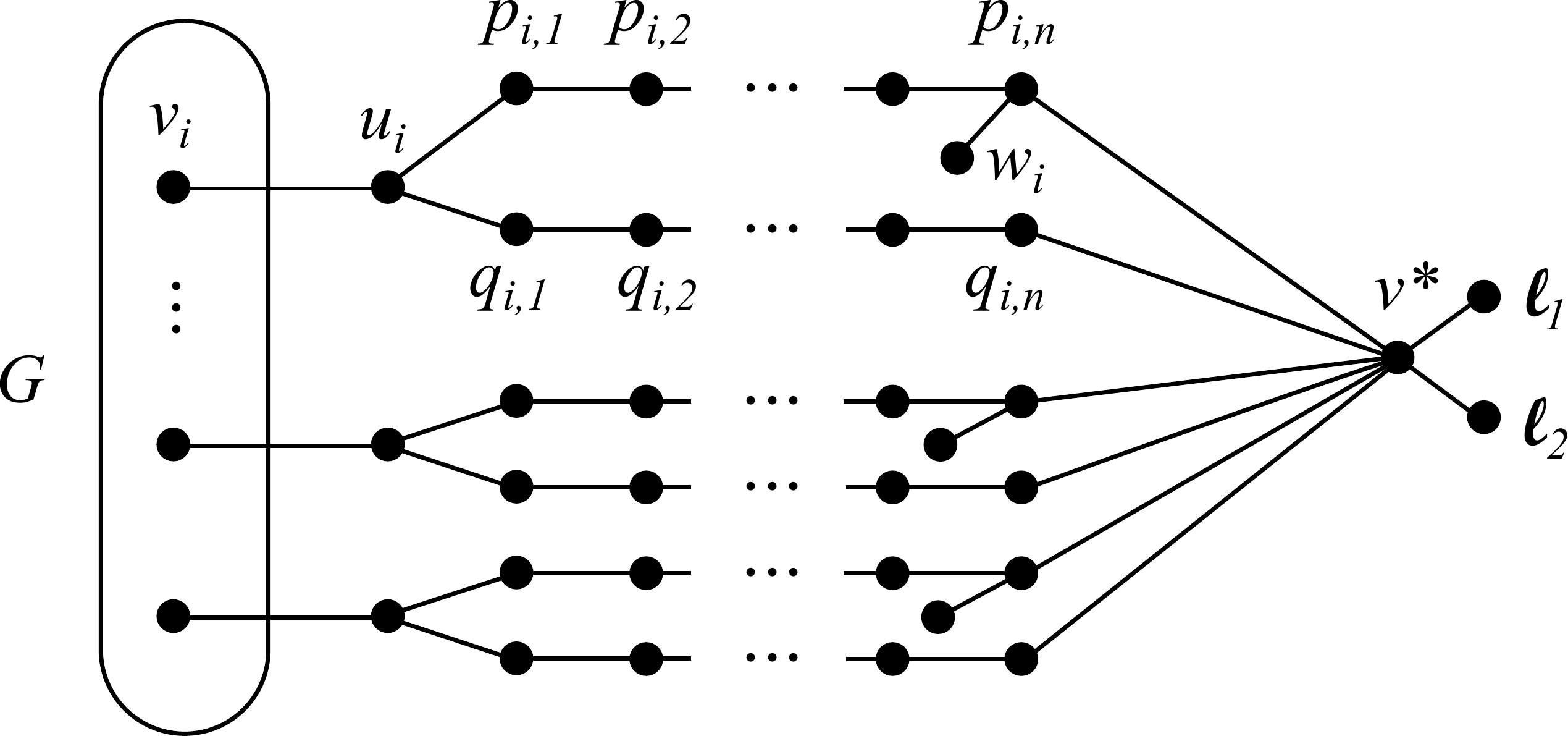}
\caption{Obtaining $G'$ from $G$.}
\label{fig_cf_complexity}
\end{center}
\end{figure}

We will now prove the correctness of $f$. Suppose $I=\langle G,k\rangle$ is a \emph{`yes'} instance of $ZF$, i.e., that $G=(V,E)$ has a zero forcing set $S=\{v_i:i\in J\}$ where $J\subset \{1,\ldots,n\}$ is some index set of size at most $k$. We claim that $S':=\{p_{i,n}:i\in J\} \cup \{v^*\}$ is a connected power dominating set of $G'$. To see why, first note that since $v^*$ is adjacent to every vertex in $S'\backslash\{v^*\}$, $G'[S']$ is connected. Next, note that $v^*$ can dominate all vertices in $\{p_{i,n},q_{i,n}:1\leq i\leq n\}\cup \{\ell_1,\ell_2\}$ at the first timestep, and the vertices in $\{p_{i,n}:i\in J\}$ can dominate the vertices in $\{p_{i,n-1}:i\in J\}\cup \{w_i:i\in J\}$ in the first timestep. Then, the vertices in $\{q_{i,j},u_i: 1\leq j\leq n-1, 1\leq i\leq n\}$ can be colored by forcing chains initiated at $q_{i,n}$, $1\leq i\leq n$. Moreover, the vertices in $\{p_{i,j}: 1\leq j\leq n-1, i\in J\}$ can be colored by forcing chains initiated at $p_{i,n}$, $i\in J$. When all these vertices are colored, the vertices in $\{u_i:i\in J\}$ will each have a single uncolored neighbor, and hence will be able to force the vertices in $\{v_i:i\in J\}$. At that point, since $\{v_i:i\in J\}$ is a zero forcing set of $G$ and since all neighbors of vertices in $V(G)$ are colored, all vertices in $V(G)$ can get forced. Finally, all vertices in $\{p_{i,j},w_i:1\leq i\leq n, i\notin J,1\leq j\leq n-1\}$ will get colored by forcing chains initiated at $u_i$, $1\leq i\leq n$, $i\notin J$. Thus $S'$ is a connected power dominating set of $G'$ of size at most $k+1$, so $f(I)=\langle G',k+1\rangle$ is a \emph{`yes'} instance of $CPD$.

Conversely, suppose $f(I)=\langle G',k+1\rangle$ is a \emph{`yes'} instance of $CPD$, i.e., that $G'$ has a connected power dominating set of size at most $k+1$. Let $S'$ be a minimum connected power dominating set of $G'$.
By Lemma \ref{MR_lemma}, $v^*$ is in $S'$. By Lemma \ref{no_leaf_lemma}, no leaf of $G'$ is in $S'$; thus, $\ell_1$, $\ell_2$, and $w_i$, $1\leq i\leq n$, are not in $S'$. No vertex of $V(G)\cup \{u_i:1\leq i\leq n\}$ can be in $S'$, since the shortest path between $v^*$ and any of these vertices passes through more than $n+1$ vertices, $v^*$ is in $S'$, $S'$ is connected, and $|S'|\leq k+1\leq n+1$. If $S'$ contains a vertex $q_{i^*,j^*}$ for some $1\leq i^*,j^*\leq n$, then it must also contain all vertices in $\{q_{i^*,j}:j^*<j\leq n\}$. Note that $q_{i^*,j^*}$ can dominate or force only vertices in the path $G'[\{q_{i^*,j}:1\leq j\leq n\}\cup\{u_{i^*}\}]$. However, since $v^*$ dominates $q_{i^*,n}$ and $q_{i^*,n}$ can initiate a forcing chain which colors all vertices in $\{q_{i^*,j}:1\leq j\leq n\}\cup\{u_{i^*}\}$, it follows that $S'\backslash \{q_{i^*,j}:j^*\leq j\leq n\}$ is also a connected power dominating set of $G'$. This contradicts the minimality of $S'$, so $S'$ does not contain any vertices of $\{q_{i,j}:1\leq i,j\leq n\}$. Similarly, $S'$ does not contain any vertices of $\{p_{i,j}:1\leq i\leq n,1\leq j\leq n-1\}$. Thus, $S'$ consists of $v^*$ and some vertices of $\{p_{i,n}:1\leq i\leq n\}$. Let $J$ be the index set of these vertices, i.e., $S'=\{v^*\}\cup \{p_{i,n}:i\in J\}$. 

By the same argument as above, the vertices in $S'$ dominate or force the vertices in $\{\ell_1,\ell_2\}\cup\{q_{i,j}:1\leq i,j\leq n\}\cup\{u_i:1\leq i\leq n\}\cup\{p_{i,n}:1\leq i\leq n\}\cup\{p_{i,j}:1\leq j\leq n-1, i\in J\}\cup\{w_i:i\in J\}\cup \{v_i:i\in J\}$. At the timestep when all these vertices get colored, no vertex in $V(G')\backslash V(G)$ can perform a force; the only way a vertex $p_{i,j}$, $1\leq i\leq n, i\notin J,1\leq j\leq n-1$ can get colored is for $v_i$ to get colored and then for $u_i$ to initiate a forcing chain containing $p_{i,j}$. Moreover, the only way a vertex in $V(G)$ can get colored is if it is forced by some other vertex in $V(G)$. It follows that $\{v_i:i\in J\}$ must be a zero forcing set of $G$, since otherwise some $v_i\in V(G)$ will never be colored, contradicting that $S'$ is a connected power dominating set. Thus, if $f(I)$ is a \emph{`yes'} instance of $CPD$, then $I$ is a \emph{`yes'} instance of $ZF$.
\end{proof}

\section{Characterizations of $\gamma_{P,c}$ for specific graphs}

While the connected power domination number is NP-hard to compute in general, in this section we show that the connected power domination numbers of cactus and block graphs can be computed efficiently.

\subsection{Trees}

We begin with a closed formula for the connected power domination number of trees, and characterize the trees whose connected power domination number equals their power domination number.

\begin{theorem}
\label{tree_thm}
Let $T=(V,E)$ be a tree. Then, $\gamma_{P,c}(T)=\max\{1,|\mathcal{M}(T)|\}$. Moreover, a minimum connected power dominating set of $T$ can be found in $O(n)$ time.
\end{theorem}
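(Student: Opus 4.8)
The plan is to prove the formula $\gamma_{P,c}(T)=\max\{1,|\mathcal{M}(T)|\}$ by combining a lower bound, an explicit construction achieving it, and then explaining how that construction is realized algorithmically in linear time.

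For the lower bound, observe that if $T$ is a path, then $\gamma_{P,c}(T)=1$ and $\mathcal{M}(T)=\emptyset$, so the formula reads $\max\{1,0\}=1$. If $T$ is not a path, then by Lemma~\ref{MR_lemma} every connected power dominating set contains $\mathcal{M}(T)$, so $\gamma_{P,c}(T)\geq|\mathcal{M}(T)|$; also any power dominating set is nonempty, so $\gamma_{P,c}(T)\geq 1$, giving $\gamma_{P,c}(T)\geq\max\{1,|\mathcal{M}(T)|\}$.

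For the upper bound, the key claim is that $S:=\mathcal{M}(T)$ (or any single vertex when $\mathcal{M}(T)=\emptyset$) is a connected power dominating set. I would first argue that $G[\mathcal{M}(T)]$ is connected: in the block tree of a tree, every block is a single edge, and the cut vertices in $R_2\cup R_3$ are exactly the non-leaf, non-``pendant-path'' cut vertices; any path in $T$ between two vertices of $\mathcal{M}(T)$ passes only through other cut vertices which are themselves forced to lie in $\mathcal{M}(T)$ (a cut vertex strictly between two elements of $\mathcal{M}$ separates $T$ into $\geq 2$ parts each containing a ``heavy'' side, so it has $c(T-v)\geq 2$ with $p(v)=0$, hence is in $R_2\cup R_3$). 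Then I would show $S$ power dominates $T$: after the domination step, $N[\mathcal{M}(T)]$ is colored; what remains uncolored are vertices lying in pendant paths hanging off $\mathcal{M}(T)$-vertices and, when $\mathcal{M}(T)=\emptyset$, the rest of a path — in every such case the uncolored region consists of disjoint paths each attached to the colored set by a single vertex with exactly one uncolored neighbor, so rule~2) propagates along each path to completion. Here I can invoke Theorem~\ref{theorem_blocks} directly: each nontrivial block $B_i$ of a tree is a ``star-like'' piece, and $\gamma_{P,c}(\ell_3(B_i,A_i))$ can be pinned down, with the $\mu(v)$ correction collapsing the sum to $|\mathcal{M}(T)|$; alternatively the direct propagation argument above is cleaner for trees.

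For the algorithmic claim, the plan is: compute all cut vertices, degrees, and identify pendant paths in $O(n)$ via a single DFS; classify each cut vertex into $R_1,R_2,R_3$ using $c(T-v)$ (for a tree, $d(v)$) and $p(v)$ (number of incident pendant paths), which is a local computation; output $\mathcal{M}(T)=R_2\cup R_3$, or an arbitrary vertex if this set is empty. Each step is linear, so the total is $O(n)$. The main obstacle I anticipate is the careful verification that $G[\mathcal{M}(T)]$ is connected and that no uncolored vertex after the first timestep ever has two uncolored neighbors — i.e., making precise that the complement of $N[\mathcal{M}(T)]$ in a tree is a disjoint union of pendant-path ``tails'' each joined to the colored set through a single vertex; this requires a clean case analysis of what a vertex outside $N[\mathcal{M}(T)]$ can look like, which is routine but must be done without gaps.
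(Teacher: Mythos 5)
Your proposal is correct and takes essentially the same route as the paper: lower bound from Lemma~\ref{MR_lemma} (plus the trivial path case), upper bound by verifying that $\mathcal{M}(T)$ is connected and power dominates $T$ because every vertex outside $\mathcal{M}(T)$ lies on a pendant path whose base is dominated in the first timestep and then forced along the path, and the $O(n)$ claim by identifying pendant paths and cut-vertex classes with a DFS. The detour through Theorem~\ref{theorem_blocks} that you mention is unnecessary, as you yourself note; the direct propagation argument is exactly the paper's proof.
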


\begin{proof}
By definition, $\mathcal{M}(T)=\emptyset$ if and only if $T$ is a path; in this case, $\gamma_{P,c}(T)=1$ and any vertex of the path is a minimum connected power dominating set. Assume henceforth that $T$ is not a path; we will show that $\mathcal{M}$ is a minimum connected power dominating set of $T$. All vertices of $T$ with degree at least 3 are in $R_3$, and all vertices of $T$ which have degree 2 and do not belong to pendant paths are in $R_2$. Thus, any vertex of $T$ which is not initially colored belongs to some pendant path, and all other vertices of that pendant path are also initially uncolored. Since deleting all vertices of a pendant path does not disconnect the graph, $\mathcal{M}$ is a connected set. Next, let $v$ be a vertex to which a pendant path is attached. Since $v$ is in $R_3$ and therefore in $\mathcal{M}$, $v$ can dominate all its neighbors in the first timestep; then, the base of each pendant path can initiate a forcing chain which forces the whole path.
Thus, $\mathcal{M}$ is a connected power dominating set of $T$, so $\gamma_{P,c}(T)\leq |\mathcal{M}|$. Moreover, by Lemma \ref{MR_lemma}, every minimum connected power dominating set of $T$ contains $\mathcal{M}$, so $\gamma_{P,c}(T)\geq |\mathcal{M}|$, and hence $\gamma_{P,c}(T)=|\mathcal{M}|$.

The vertices in $R_1(T)$ can be found in $O(n)$ time (e.g. by starting from the degree 1 vertices of the graph and applying depth-first-search until a vertex of degree at least 3 is reached). Thus, the set $\mathcal{M}(T)=V(T)\backslash R_1(T)$ can also be found in $O(n)$ time. 
\end{proof}

\noindent Since the set $\mathcal{M}(G)$ is uniquely determined, we have the following corollary to Theorem \ref{tree_thm}. 

\begin{corollary}
If $T$ is a tree different from a path, $T$ has a unique minimum connected power dominating set. 
\end{corollary}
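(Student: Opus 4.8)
The plan is to show that $\mathcal{M}(T)$ is in fact the \emph{only} minimum connected power dominating set of $T$, from which uniqueness is immediate, since $\mathcal{M}(T)$ depends only on $T$ (it is precisely $V(T)\setminus R_1(T)$). First I would invoke Theorem~\ref{tree_thm}: because $T$ is not a path, it gives both $\gamma_{P,c}(T)=|\mathcal{M}(T)|$ and the fact that $\mathcal{M}(T)$ itself realizes this minimum, so at least one minimum connected power dominating set of this cardinality exists.

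Next, let $S$ be an arbitrary minimum connected power dominating set of $T$. By Lemma~\ref{MR_lemma}, $\mathcal{M}(T)\subseteq S$. Combining this containment with $|S|=\gamma_{P,c}(T)=|\mathcal{M}(T)|$ forces $S=\mathcal{M}(T)$. Hence every minimum connected power dominating set of $T$ coincides with the single fixed set $\mathcal{M}(T)$, which establishes uniqueness.

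I do not anticipate any genuine obstacle: the corollary is a direct consequence of the membership statement of Lemma~\ref{MR_lemma} together with the cardinality equality supplied by Theorem~\ref{tree_thm}. The one point requiring care is the hypothesis that $T$ differs from a path — this is exactly what ensures $\mathcal{M}(T)\neq\emptyset$ and that Theorem~\ref{tree_thm} yields $\gamma_{P,c}(T)=|\mathcal{M}(T)|$ rather than the degenerate value $1$; for a path every single vertex is a minimum connected power dominating set, so uniqueness genuinely fails there and the hypothesis cannot be removed.
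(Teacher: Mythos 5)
Your argument is correct and is essentially the paper's own: the paper derives the corollary directly from Theorem~\ref{tree_thm} (which shows $\mathcal{M}(T)$ is a minimum connected power dominating set of size $\gamma_{P,c}(T)$) together with Lemma~\ref{MR_lemma} (which forces $\mathcal{M}(T)$ into every such set), so cardinality pins down $S=\mathcal{M}(T)$. Your additional remark about the path hypothesis is accurate but not needed beyond matching the theorem's hypotheses.
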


\noindent The next result gives a characterization of trees for which $\gamma_{P,c}(T)=\gamma_P(T)$. 

%\begin{lemma}
%\label{lemma_pendant_tree_neq}
%Let $G$ be a connected graph different from a path. If $G$ has a vertex $v$ to which two or more pendant paths are attached, then $Z(G)<Z_c(G)$.
%\end{lemma}
%
%\begin{proof}
%Let $R$ be a minimum connected forcing set of $G$. By definition $v$ belongs to $R_3$, so by Lemma \ref{MR_lemma}, $v$ is in $R$; moreover, all or all-but-one bases of pendant paths attached to $v$ are in $R$. Let $Z$ be the set obtained by removing $v$ from $R$ and replacing each base of a pendant path attached to $v$ by the leaf of that pendant path. Then $Z$ is a zero forcing set of $G$, because each colored leaf will force its pendant path and then $v$; at that timestep, the set of colored vertices contains $R$ and is therefore forcing. Thus, $Z$ is a zero forcing set of size $|R|-1$, so $Z(G)\leq |Z|<Z_c(G)$.
%\end{proof}
%

\begin{proposition}
\label{prop_tree_equality}
Let $T = (V,E)$ be a tree. Then $\gamma_{P,c}(T) = \gamma_{P}(T)$ if and only if $T$ satisfies one of the following conditions:
\begin{enumerate}
\item[$1)$] $T$ is a path.
\item[$2)$] $T$ is a tree such that for every vertex $v\in V(T)$, if $d(v)=2$, $v$ belongs to a pendant path and if $d(v)\geq 3$, $v$ is the base of at least 2 pendant paths.
\end{enumerate}
\end{proposition}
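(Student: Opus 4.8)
The plan is to prove both directions by relating $\gamma_{P,c}(T)$ and $\gamma_P(T)$ to the structure of $\mathcal{M}(T)$. By Theorem~\ref{tree_thm} we already know $\gamma_{P,c}(T)=\max\{1,|\mathcal{M}(T)|\}$, so the task reduces to characterizing when a minimum power dominating set of $T$ has size exactly $\max\{1,|\mathcal{M}(T)|\}$. Condition $1)$ is immediate: if $T$ is a path then $\gamma_P(T)=\gamma_{P,c}(T)=1$, so assume from now on that $T$ is not a path, whence $\mathcal{M}(T)\neq\emptyset$ and $\gamma_{P,c}(T)=|\mathcal{M}(T)|$.

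For the ``if'' direction (assuming condition $2)$), I would show that $\mathcal{M}(T)$ is in fact a \emph{minimum} power dominating set, not merely a minimum connected one. Since $\gamma_P(T)\le\gamma_{P,c}(T)=|\mathcal{M}(T)|$ always holds, it suffices to prove $\gamma_P(T)\ge|\mathcal{M}(T)|$ under condition $2)$. Here I would argue that every power dominating set must ``pay'' for each vertex of $\mathcal{M}(T)$. The key observation is that under condition $2)$, every vertex $v$ with $d(v)\ge 3$ is the base of at least two pendant paths, so by Observation~\ref{obs_leaves} (applied to $\ell_3(\cdot)$-style reasoning, or directly to the pendant-path version stated after it) there is a minimum power dominating set containing all such $v$; and the vertices of $R_2$ are degree-$2$ vertices not on pendant paths, which — because condition $2)$ forces the tree to be a ``spider-like'' caterpillar of pendant paths glued at high-degree vertices — lie on the unique paths between the $R_3$ vertices and must also be forced, consuming budget. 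More carefully, I would set up a counting/charging argument: orient forcing chains of an optimal power dominating set and show each chain can be charged to a distinct vertex of $\mathcal{M}(T)$, or equivalently that contracting pendant paths reduces $T$ to a structure where the power domination number is clearly $|\mathcal{M}(T)|$.

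For the ``only if'' direction, I would prove the contrapositive: if $T$ is not a path and condition $2)$ fails, then $\gamma_P(T)<|\mathcal{M}(T)|=\gamma_{P,c}(T)$. Condition $2)$ can fail in two ways. First, there is a vertex $v$ with $d(v)=2$ not lying on a pendant path — such $v$ is in $R_2(T)$, so $v\in\mathcal{M}(T)$, but $v$ need not be in any minimum power dominating set (a single forcing chain can pass straight through $v$ without $v$ being chosen), so deleting $v$'s ``mandatory'' status saves one; I would exhibit a power dominating set that omits $v$ while still covering everything, undercutting $|\mathcal{M}(T)|$. Second, there is a vertex $v$ with $d(v)\ge 3$ that is the base of at most one pendant path. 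If $v$ is the base of exactly one pendant path, the other $\ge 2$ branches at $v$ are nontrivial subtrees; one can place the PMU not at $v$ but somewhere that simultaneously handles $v$'s region and a neighboring region, again beating the count — concretely, an optimal (unconstrained) power dominating set can be chosen to ``skip'' such a $v$, which is impossible for a connected power dominating set since $v\in R_3\subset\mathcal{M}(T)$. In both subcases I would construct the smaller power dominating set explicitly by choosing, in each pendant tree hanging off the relevant vertex, a suitable near-leaf vertex and showing the forcing process completes.

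The main obstacle I anticipate is the ``only if'' direction's second subcase and, more generally, obtaining a clean lower bound of the form $\gamma_P(T)\ge|\mathcal{M}(T)|$ in the ``if'' direction. Power domination lower bounds are notoriously delicate because one PMU, via rule~1), colors an entire closed neighborhood before any forcing begins, so a single well-placed vertex can ``cover'' several elements of $\mathcal{M}(T)$ at once unless the pendant-path hypothesis of condition $2)$ rigidly prevents it. I would handle this by a careful case analysis on the local structure at each $\mathcal{M}(T)$-vertex, likely invoking Proposition~\ref{prop_leaf_graph} / $\ell_3(T,\mathcal{M}(T))$ to make the ``each mandatory vertex behaves like a vertex with three pendant leaves'' intuition rigorous, and by an exchange argument showing any minimum power dominating set can be pushed to one that contains $\mathcal{M}(T)$ exactly when condition $2)$ holds. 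The bookkeeping of forcing chains through a caterpillar-of-pendant-paths is routine once the framework is in place, so I would not belabor it.
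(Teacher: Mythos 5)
Your skeleton matches the paper's (reduce to $\mathcal{M}(T)$ via Theorem \ref{tree_thm}, use Observation \ref{obs_leaves} for the lower bound, prove the converse by contrapositive), but the two places where you defer the work are exactly where the proof lives, and your proposed routes there are respectively unnecessary and incomplete. For the ``if'' direction you flag the lower bound $\gamma_P(T)\ge|\mathcal{M}(T)|$ as the main obstacle and propose charging arguments, contraction of pendant paths, and an appeal to $\ell_3(T,\mathcal{M}(T))$; none of this is needed, and your aside about $R_2$ vertices ``consuming budget'' is off the mark, because under condition $2)$ every degree-$2$ vertex lies on a pendant path, so $R_2(T)=\emptyset$ and $\mathcal{M}(T)=R_3(T)$. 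Since condition $2)$ places at least two pendant paths at every vertex of $R_3(T)$, the pendant-path version of Observation \ref{obs_leaves} immediately gives a minimum power dominating set containing all of $R_3(T)$, hence $\gamma_P(T)\ge|R_3(T)|=\gamma_{P,c}(T)$, and the direction closes in two lines.

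The more serious gap is in the contrapositive when some $v$ with $d(v)\ge 3$ is the base of at most one pendant path. Observing that an optimal unconstrained power dominating set ``can be chosen to skip $v$'' does not yield $\gamma_P(T)<\gamma_{P,c}(T)$: a set avoiding $v$ could still have size $|\mathcal{M}(T)|$, so you must exhibit a power dominating set of size strictly less than $|\mathcal{M}(T)|$, and your sketch (place a PMU that ``handles $v$'s region and a neighboring region,'' or pick ``a suitable near-leaf vertex in each pendant tree'') never specifies one nor verifies that forcing completes. The paper's construction is simpler and uniform across both failure cases: take $S=\mathcal{M}(T)$ and show $S\setminus\{v\}$ is still a power dominating set. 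If $v\in R_2(T)$, both neighbors of $v$ lie in $\mathcal{M}(T)$ (in a tree a neighbor of an $R_2$ vertex can be neither a leaf nor a pendant-path vertex), so $v$ is dominated in the first timestep. If $d(v)\ge 3$ with at most one pendant path attached, then $v$ has a neighbor in $R_2(T)\cup R_3(T)\subset S\setminus\{v\}$ which dominates $v$ immediately, and the single pendant path at $v$ (if any) is forced later by $v$ once all of $v$'s other neighbors are colored. Without this, or an equally explicit construction with the forcing verified, your ``only if'' direction is not closed.
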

\begin{proof}
It is easy to see if $T$ is a path, $\gamma_{P,c}(T) = \gamma_{P}(T)$; thus, we will assume henceforth that $T$ is not a path. 

Let $T$ be a tree such that $\gamma_{P,c}(T)=\gamma_{P}(T)$, and let $S$ be a minimum connected power dominating set of $T$. If $T$ has a vertex $v$ with $d(v)=2$ which is not in a pendant path, $v$ must be in $R_2$ and therefore in $S$. However, $S \backslash \{v\}$ is clearly also a (non-connected) power dominating set since any neighbor of $v$ can force $v$, so $\gamma_P(T) < \gamma_{P,c}(T)$. Similarly, if $T$ has a vertex $v$ with $d(v)\geq 3$ which is adjacent to 1 or 0 pendant paths, we claim that $S\backslash \{v\}$ is a power dominating set. Indeed, since $v$ has at least 3 neighbors, at most one of which belongs to a pendant path, $v$ has a neighbor in $R_2$ or $R_3$, and can be dominated by that neighbor. Moreover, the pendant path attached to $v$ (if it exists) can be forced by $v$ in later timesteps since all other neighbors of $v$ are colored. Thus, it follows that $\gamma_P(T)<\gamma_{P,c}(T)$.

Now suppose $T$ is a (non-path) tree which satisfies condition 2) in Proposition \ref{prop_tree_equality}. Since all degree 2 vertices of $T$ belong to pendant paths, $R_2(T)=\emptyset$. Thus, by Theorem \ref{tree_thm}, $\mathcal{M}(T)=R_3(T)$ is a minimum connected power dominating set of $T$. Moreover, since every vertex of degree at least 3 has at least two pendant paths attached to it, by Observation \ref{obs_leaves}, there exists a minimum power dominating set which contains every vertex in $R_3$. Thus, $\gamma_P(T)\geq |R_3|=\gamma_{P,c}(T)\geq \gamma_P(T)$, so $\gamma_{P,c}(T)=\gamma_P(T)$.
\end{proof}

\subsection{Block graphs}

We now extend the result of Theorem \ref{tree_thm} to block graphs. A \emph{block graph} is a graph whose biconnected components are cliques.

\begin{theorem}
\label{block_graph_thm}
Let $G=(V,E)$ be a block graph. Then, $\gamma_{P,c}(G)=\max\{1,|\mathcal{M}(G)|\}$.
\end{theorem}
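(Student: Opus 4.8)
The plan is to follow the same template as the proof of Theorem~\ref{tree_thm}: prove the lower bound $\gamma_{P,c}(G)\ge\max\{1,|\mathcal{M}(G)|\}$ from Lemma~\ref{MR_lemma}, and for the matching upper bound exhibit a connected power dominating set of the stated size. The lower bound is immediate, since $\gamma_{P,c}(G)\ge 1$ always and, by Lemma~\ref{MR_lemma}, every connected power dominating set of $G$ contains $\mathcal{M}(G)$. For the upper bound I would first record two structural observations about block graphs, both proved by inspecting the block tree together with the definitions of $R_1$, $R_2$, $R_3$: \emph{(i)} if $G$ is not a path and $\mathcal{M}(G)=\emptyset$, then $G$ is a single clique $Q$ of order at least $3$ with at most one pendant path attached to each vertex of $Q$; and \emph{(ii)} if $\mathcal{M}(G)\neq\emptyset$, then every block of $G$ that is a clique of order at least $3$ contains a vertex of $\mathcal{M}(G)$, and every vertex lying on a path of $K_2$-blocks that joins two such cliques (including its two endpoints) lies in $R_2\subseteq\mathcal{M}(G)$. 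Both facts rest on the observation that deleting a cut vertex from a clique of order at least $3$ always leaves a component that is not a pendant path, so that such a cut vertex (or an interior vertex of such a bridge-path) can escape $\mathcal{M}(G)$ only if essentially the whole remainder of $G$ is a pendant path, which would force $\mathcal{M}(G)=\emptyset$.

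Suppose first that $\mathcal{M}(G)=\emptyset$. If $G$ is a path the claim is trivial; otherwise, by \emph{(i)}, $G$ is a clique $Q$ of order at least $3$ with at most one pendant path at each vertex. Picking any $q\in Q$, applying rule~1 colours all of $Q$ together with the base of the pendant path at $q$, if any. After this, each vertex of $Q$ has at most one uncoloured neighbour, namely the base of its own pendant path (there is at most one such path per vertex, since two would place the vertex in $R_3$), which it forces by rule~2; then each pendant path is coloured one vertex at a time. Hence $\{q\}$ is a connected power dominating set and $\gamma_{P,c}(G)=1=\max\{1,|\mathcal{M}(G)|\}$.

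Now suppose $\mathcal{M}(G)\neq\emptyset$; I claim $\mathcal{M}(G)$ itself is a connected power dominating set. For connectedness, given $u,v\in\mathcal{M}(G)$, take a shortest path $B^{(1)},c^{(1)},B^{(2)},\dots,c^{(t-1)},B^{(t)}$ between a block containing $u$ and a block containing $v$ in the block tree, with $u\in B^{(1)}$ and $v\in B^{(t)}$. Each cut vertex $c^{(j)}$ separates $u$ from $v$, so $c(G-c^{(j)})\ge 2$; moreover a component of $G-c^{(j)}$ that is a pendant path of $c^{(j)}$ contains only vertices of $R_1$ and leaves, hence cannot contain $u$ or $v$, so $c^{(j)}\in R_2\cup R_3=\mathcal{M}(G)$. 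Since every block is a clique, consecutive vertices of the sequence $u,c^{(1)},\dots,c^{(t-1)},v$ are adjacent, giving a walk in $G[\mathcal{M}(G)]$. For domination, after applying rule~1 the set $N[\mathcal{M}(G)]$ is coloured, and by \emph{(ii)} this set contains every clique of order at least $3$ and every $K_2$-block path joining two of them; therefore every still-uncoloured vertex lies on a pendant path $P$ attached to some vertex $v$ with $d(v)\ge 3$. Such a $v$ is coloured (it is either in a clique of order at least $3$, which meets $\mathcal{M}(G)$, or it is a branch vertex in $R_3\subseteq\mathcal{M}(G)$), and one of two things happens: either $v$ carries two or more pendant paths, so $v\in R_3\subseteq\mathcal{M}(G)$ and rule~1 already coloured the base of each; or $v$ carries only $P$, so $v$ lies in exactly one further block, necessarily a clique of order at least $3$ (as $d(v)\ge3$ while $P$ contributes only one edge at $v$), all of whose vertices are already coloured, leaving the base of $P$ as the unique possibly-uncoloured neighbour of $v$, which $v$ then forces by rule~2. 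In both cases the base of $P$ becomes coloured, after which $P$ is coloured one vertex at a time. Thus $\mathcal{M}(G)$ is a connected power dominating set, so $\gamma_{P,c}(G)\le|\mathcal{M}(G)|=\max\{1,|\mathcal{M}(G)|\}$.

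I expect the main obstacle to be the bookkeeping behind \emph{(i)} and \emph{(ii)}: correctly deciding which cut vertices of a block graph lie in $R_1$ rather than in $\mathcal{M}(G)$, and in particular ruling out the awkward configurations — a clique or a $K_2$-block path ``passing through'' the graph without touching $\mathcal{M}(G)$, or a path that attaches to the rest of $G$ at two vertices of a small clique and so fails to be a pendant path — that could otherwise leave uncoloured vertices stranded outside the pendant paths. Everything else is a direct adaptation of the tree argument.
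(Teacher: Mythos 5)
Your proposal is correct and takes essentially the same route as the paper: the lower bound comes from Lemma~\ref{MR_lemma}, and the upper bound from showing that $\mathcal{M}(G)$ (or a single vertex of the clique when $\mathcal{M}(G)=\emptyset$) is a connected power dominating set, using the same two key facts — that when $\mathcal{M}(G)\neq\emptyset$ every clique of order at least $3$ meets $\mathcal{M}(G)$, and that paths between vertices of $\mathcal{M}(G)$ run through cut vertices of $\mathcal{M}(G)$ — with the only stylistic difference being that the paper delegates the $K_2$-block/pendant-tree bookkeeping to Theorem~\ref{tree_thm} while you argue it directly. One minor slip: ``$v$ carries only $P$'' does not imply $v$ lies in exactly one further block, but in the omitted case ($v$ in two or more further blocks) we get $c(G-v)\geq 3$, so $v\in R_3\subseteq\mathcal{M}(G)$ and the base of $P$ is dominated in the first timestep, so your argument still goes through.
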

\begin{proof}
If $\mathcal{M}(G)=\emptyset$, then either $G\simeq P_n$, or $G\not\simeq P_n$ and all cut vertices of $G$ are in $R_1(G)$. In the latter case, $G$ consists of a single clique $K$ of size at least 3, and at most one pendant path attached to each vertex of $K$. Then, a single vertex of $K$ is a (connected) power dominating set of $G$, since it can dominate all vertices of $K$ in the first timestep, and then any pendant paths attached to vertices of $K$ can be forced by their respective bases. Thus, if $\mathcal{M}(G)=\emptyset$, $\gamma_{P,c}(G)=1$.

Now suppose that $\mathcal{M}(G)\neq\emptyset$; we claim that $\mathcal{M}(G)$ is a connected power dominating set of $G$. Note that in a block graph, the shortest path between two vertices is unique; in particular, the shortest path between two cut vertices in $\mathcal{M}(G)$ is entirely composed of other cut vertices in $\mathcal{M}(G)$. Thus, $\mathcal{M}(G)$ is a connected set. If $v$ belongs to a maximal clique $K$ of $G$ of size 2 which is part of a pendant tree $T$, then $v$ is either in $\mathcal{M}(G)$, or is in a pendant path and gets forced by $\mathcal{M}(G)\cap T$ by Theorem \ref{tree_thm}. If $v$ belongs to a maximal clique $K$ of $G$ of size 2 which is not part of a pendant tree, then $v$ is in $R_2(G)$ (and hence in $\mathcal{M}(G)$) and is therefore initially colored. Now suppose $v$ belongs to a clique $K$ of $G$ of size at least 3. Since $\mathcal{M}(G)\neq \emptyset$, every clique of $G$ of size at least 3 contains at least one vertex of $\mathcal{M}(G)$. Thus, a vertex in $\mathcal{M}(G)\cap K$ can dominate $v$ in the first timestep. Since $v$ gets colored in every case, $\mathcal{M}(G)$ is a connected power dominating set of $G$. By Lemma $\ref{MR_lemma}$, every connected power dominating set of $G$ contains $\mathcal{M}(G)$, so $\mathcal{M}(G)$ is a minimum connected power dominating set of $G$.
%Let $B_1,\ldots,B_k$ be the nontrivial blocks of $G$, let $A_i=V(B_i)\cap \mathcal{M}(G)$, and for $1\leq i\leq k$, let $S_i$ be an arbitrary minimum connected power dominating set of $B_i$ subject to $A_i$. Then, recall from the proof of Theorem \ref{theorem_blocks} that $S:=\cup_{i=1}^k S_i$ is a minimum connected power dominating set of $G$, and that $|S|=\sum_{i=1}^k|S_i|-\sum_{i=1}^k|A_i| + \left|\bigcup_{i=1}^k A_i\right|$. For $1\leq i\leq k$, $|S_i|=|V(G_i)\cap \mathcal{M}(G)|=|A_i|$. Thus, $|S|=\left|\bigcup_{i=1}^k A_i\right|=\mathcal{M}(G)$.
\end{proof}
%[talk about blocks of size 2 that are not trivial (not uniclique)]

\subsection{Cactus graphs}

In this section, we give a linear-time algorithm for finding a minimum connected power dominating set of a cactus graph. A \emph{cactus graph} is a graph in which any two cycles have at most one common vertex; equivalently, it is a graph whose biconnected components are cycles or cut edges. We first establish some results which are applicable to arbitrary graphs containing a cycle block. 

Let $G$ be a connected graph and $C$ be the vertex set of a block of $G$ such that $G[C]$ is a cycle. Given vertices $u$ and $v$ of $C$, let $(u\hookrightarrow v)$ be the set of vertices of $C$ (given a plane embedding) encountered while traveling counterclockwise from $u$ to $v$, not including $u$ and $v$. We will refer to $(u\hookrightarrow v)$ as a \emph{segment} of $C$. Note that a segment of the form $(u\hookrightarrow u)$ is also well-defined.

%Let $(u\hookrightarrow)$ be the neighbor of $u$ which is counterclockwise of $u$ in $C$, and $(\hookleftarrow u)$ be the neighbor of $u$ which is clockwise of $u$ in $C$. 

\begin{observation}
\label{one_seg_obs}
Let $G=(V,E)$ be a connected graph and $C$ be the vertex set of a block of $G$ such that $G[C]$ is a cycle. Then, any set $R\subset V$ such that $G[R]$ is connected can exclude at most one segment of $C$.
\end{observation}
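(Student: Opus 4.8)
The plan is to reduce the statement to the following structural fact: if $G[R]$ is connected then $R\cap C$ is a contiguous arc of the cycle $G[C]$ (equivalently, the vertices of $C$ missing from $R$ all lie on one arc), which immediately gives that $R$ excludes at most one segment. The key lemma I would isolate and prove first is that \emph{every path of $G$ whose endpoints both lie in $C$ uses only vertices of $C$} --- that is, a connected set can only reach one side of the cycle block through the block itself.

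To establish this lemma I would argue by contradiction: if an $a$--$b$ path $P$ with $a,b\in C$ ever leaves $C$, then it contains a maximal detour $x', z_1,\dots,z_s,y'$ with $x',y'\in C$, $s\ge 1$, and each $z_i\notin C$; because $P$ is a path (no repeated vertices), $x'\ne y'$. Splicing this detour together with either of the two $x'$--$y'$ arcs of $G[C]$ yields a cycle of $G$ containing $x'$, $y'$, and the external vertex $z_1$. Since any cycle of $G$ lies within a single block, this cycle lies in some block $B$; but then $B$ contains $z_1\notin C$, so $B\ne C$, while $B$ and $C$ share the two vertices $x'$ and $y'$ --- contradicting the standard fact that two distinct blocks of a graph meet in at most one vertex.

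Granting the lemma, the rest is routine: for any $a,b\in R\cap C$, connectivity of $G[R]$ gives an $a$--$b$ path in $G[R]$, and the lemma forces it to stay inside $C$, hence inside $R\cap C$; so $G[R\cap C]$ is connected. An induced subgraph of a cycle is connected only if it is a single arc (or the whole cycle, or a single vertex, or empty), so $R\cap C$ is an arc of $C$, and therefore the vertices of $C$ outside $R$ lie in at most one segment.

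The main obstacle, and the only place any real work is needed, is the block-theoretic lemma --- specifically the detour-to-cycle argument and the appeal to the facts that cycles lie inside blocks and distinct blocks share at most one vertex. The degenerate cases (when $R\cap C$ is empty or equals $C$) need only a one-line check and cause no trouble, and nowhere do we use more than that $G[C]$ is a cycle, so $|C|\ge 3$.
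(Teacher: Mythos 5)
Your proof is correct: the detour-splicing argument (a path leaving $C$ between two of its vertices would create a cycle forcing two distinct blocks to share the vertices $x'$ and $y'$) is sound, and it yields that $R\cap C$ induces a connected subgraph of the cycle, hence an arc, so the excluded vertices form at most one segment. The paper states this as an observation with no proof at all, and your block-theoretic argument is precisely the standard justification implicitly intended, so there is nothing to compare beyond noting that you have supplied the missing details (including the degenerate cases, which indeed cause no trouble).
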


In particular, a connected power dominating set of a graph $G$ with cycle block $C$ can exclude at most one segment of $C$.

\begin{lemma}
\label{two_ap_lemma}
Let $G=(V,E)$ be a connected graph and $C$ be the vertex set of a block of $G$ such that $G[C]$ is a cycle. A segment of $C$ can be excluded from a connected power dominating set of $G$ if and only if it contains:
\begin{itemize}
\item[--] 0 cut vertices of $C$, or
\item[--] 1 cut vertex of $C$ which is in $R_1(G)$, or 
\item[--] 2 cut vertices of $C$, which are adjacent and both of which are in $R_1(G)$.
\end{itemize}
\end{lemma}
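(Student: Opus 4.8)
The plan is to prove both directions by analyzing what a connected power dominating set $R$ can do on the cycle block $C$ and its surrounding structure. Throughout, write $s=(u\hookrightarrow v)$ for the excluded segment, so that $C\setminus s$ induces a path (or a single vertex, or all of $C$) in $G[R]$, and recall from Observation~\ref{one_seg_obs} that at most one such segment can be excluded. The key structural fact I would lean on is that any cut vertex $w\in C$ lies in exactly one of $R_1(G)$, $R_2(G)$, $R_3(G)$ (these partition the cut vertices), and that by Lemma~\ref{MR_lemma} every vertex of $\mathcal{M}(G)=R_2(G)\cup R_3(G)$ is in $R$. Hence no vertex of $R_2(G)\cup R_3(G)$ can ever be excluded, which immediately rules out the excluded segment containing any cut vertex of $C$ other than ones in $R_1(G)$. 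That handles one half of the ``only if'' direction at a stroke; what remains on that side is to show the segment cannot contain $\geq 3$ cut vertices of $C$ from $R_1(G)$, nor $2$ non-adjacent such cut vertices.

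For the ``only if'' direction I would argue as follows. Suppose $s$ is excluded and contains a cut vertex $w\in R_1(G)$. By definition of $R_1$, $c(G-w)=2$ and $w$ belongs to a pendant path, with $p(w)=1$; so $w$ has a pendant path $P_w$ hanging off it (the component of $G-w$ not containing $C$). Now the only neighbors of the vertices of $P_w\cup\{w\}$ outside $P_w\cup\{w\}$ are the two $C$-neighbors of $w$. Since all of $P_w$ and $w$ itself are uncolored (they are in the excluded segment, or in a pendant path hung off it), the only way to color them is by a single forcing chain entering through one of $w$'s two $C$-neighbors and running down $P_w$. For this chain to proceed, $w$ must be the unique uncolored neighbor of that $C$-neighbor at the relevant timestep — which forces the segment on the *other* side of $w$ (i.e.\ $w$'s other $C$-neighbor and everything beyond it along $s$) to already be colored. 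This is where the adjacency and the ``at most 2'' constraints come from: if $s$ contained two cut vertices $w_1,w_2\in R_1(G)$ that were non-adjacent, there would be an uncolored vertex of $C$ strictly between them, and neither pendant-path-forcing chain could get started; and a third such cut vertex, or two adjacent ones plus extra vertices, similarly blocks the chains. I expect the bookkeeping here — carefully tracking which $C$-vertex forces which, and in what order, to conclude that at most two adjacent $R_1$-cut-vertices can sit in the excluded segment — to be the main obstacle; it will require a short case analysis on the number and arrangement of cut vertices within $s$.

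For the ``if'' direction, I would exhibit, in each of the three listed cases, a connected power dominating set of $G$ that excludes the given segment $s$. Start from $R:=V\setminus(s\cup(\text{pendant paths attached to vertices of }s))$; by Observation~\ref{one_seg_obs}-type reasoning (deleting one segment of a cycle, together with pendant paths hung off it, leaves the rest connected) this $R$ is connected, and it contains all of $\mathcal{M}(G)$. It remains to check $R$ power-dominates $G$: the two endpoints $u,v$ of the path $C\setminus s$ are in $R$ and each now has at most one uncolored neighbor in $C$ (namely the first vertex of $s$ on its side), so forcing can propagate inward along $s$ from both ends; when the propagation reaches an $R_1$-cut-vertex $w$ in $s$, the chain continues down $w$'s pendant path; the only place the two inward chains from $u$ and $v$ can meet is the middle of $s$, and the three cases exactly ensure that any cut vertex inside $s$ is reached *before* the chain would need it to have two uncolored neighbors — with the ``2 adjacent'' case handled because once one of the two adjacent cut vertices is colored, it has a unique uncolored neighbor (the other cut vertex) and can force it. After all of $C$ and its pendant paths are colored, the rest of $G$ is already in $R$, so $R$ is a connected power dominating set. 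Combining the two directions gives the claimed characterization.
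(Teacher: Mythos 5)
Your overall strategy matches the paper's: use Lemma \ref{MR_lemma} to conclude that every cut vertex of an excluded segment lies in $R_1(G)$, argue the ``only if'' direction via the stalling of colored $R_1$-cut vertices, and for the ``if'' direction take $R=V\setminus\bigl(s\cup\{\text{pendant paths attached to } s\}\bigr)$ and propagate two forcing chains inward from the ends of $C\setminus s$. But two steps, as written, do not hold up. First, the heart of the ``only if'' direction --- at most two cut vertices in the segment, and if two then adjacent --- is exactly the part you defer as ``bookkeeping,'' and the one concrete claim you make there, that ``two adjacent ones plus extra vertices similarly blocks the chains,'' is false if read literally: a segment containing two adjacent $R_1$-cut vertices together with additional non-cut vertices is precisely a feasible segment (these are the segments in $\mathcal{S}_2(C)$ used later for the cactus algorithm), and your own ``if'' direction is supposed to prove their feasibility. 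The missing argument is short and should be stated: if the excluded segment contains cut vertices $p,q,r$ in this cyclic order (or two non-adjacent cut vertices $p,r$), then every vertex strictly between $p$ and $r$ can only be colored through $p$ or $r$; but once $p$ (resp.\ $r$) is colored it has two uncolored neighbors, namely its pendant-path base and its cycle-neighbor toward the interior, so it can never perform a force, and the interior vertices (which include $q$, resp.\ are nonempty by non-adjacency) are never colored, contradicting that $R$ is a power dominating set.

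Second, in the ``if'' direction your description of the forcing order is wrong at the decisive point. When a chain reaches an $R_1$-cut vertex $w$ of the segment it does not ``continue down $w$'s pendant path'': at that moment $w$ has two uncolored neighbors (its pendant base and its other cycle-neighbor) and stalls until the chain from the opposite end colors that cycle-neighbor. In the two-adjacent-cut-vertices case, the mechanism you invoke --- ``once one of the two adjacent cut vertices is colored, it has a unique uncolored neighbor (the other cut vertex) and can force it'' --- never occurs, because the colored cut vertex's own pendant base is also uncolored, so it can never force the other cut vertex. What actually happens is that each of $w_1,w_2$ is colored from its own side ($w_2$ being forced by its non-cut neighbor in the segment, or dominated by the endpoint of $C\setminus s$ if it is adjacent to it), and only after both of its cycle-neighbors are colored does each $w_i$ force its own pendant path. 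With these two corrections the argument goes through and coincides with the paper's proof.
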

\begin{proof}
We will first show that if a segment of $C$ is excluded from a connected power dominating set of $G$, then it contains either no cut vertices, or one cut vertex in $R_1(G)$, or two adjacent cut vertices in $R_1(G)$. Let $R$ be an arbitrary connected power dominating set of $G$ and $(u\hookrightarrow v)$ be a segment of $C$ not contained in $R$. By Lemma \ref{MR_lemma}, $\mathcal{M}\subset R$, so each vertex in $(u\hookrightarrow v)$ is either a non-cut vertex, or a cut vertex in $R_1(G)$; in the latter case, the entire pendant path attached to the vertex is also not in $R$ since otherwise $R$ could not be connected. Suppose $(u\hookrightarrow v)$ contains three distinct cut vertices, $p$, $q$, and $r$, lying on $C$ in this counterclockwise order. Every path from a vertex of $R$ to a vertex in $(p\hookrightarrow r)$ passes through $p$ or $r$. However, once $p$ and $r$ are dominated or forced by some forcing chains starting outside $(u\hookrightarrow v)$, each of $p$ and $r$ will have two uncolored neighbors and will not be able to force another vertex. Thus, the vertices in $(p\hookrightarrow r)$ cannot be forced; note that $(p\hookrightarrow r)\neq \emptyset$ since $q\in (p\hookrightarrow r)$. This contradicts $R$ being a power dominating set, so $(u\hookrightarrow v)$ can contain at most two cut vertices. Similarly, if $(u\hookrightarrow v)$ contains two cut vertices which are not adjacent, then the vertices in $(p\hookrightarrow r)$ cannot be forced. Thus, if $(u\hookrightarrow v)$ contains two cut vertices, they must be adjacent.

Now let $(u\hookrightarrow v)$ be any segment of $C$ which contains either no cut vertices, or one cut vertex in $R_1(G)$, or two adjacent cut vertices in $R_1(G)$. 
We claim that the set $S$ obtained by removing $(u\hookrightarrow v)$ and all pendant paths attached to $(u\hookrightarrow v)$ from $V$ is a connected power dominating set of $G$. 
%If there are two cut vertices $p$ and $q$ in $(u\hookrightarrow v)$, let $P_1$ and $P_2$ be the vertex sets of the pendant paths attached to $p$ and $q$; if there is one cut vertex $p$ in $(u\hookrightarrow v)$, let $P_1$ be the vertex set of the pendant path attached to $p$ and $P_2=\emptyset$; if there are no cut vertices in $(u\hookrightarrow v)$, let $P_1=\emptyset$ and $P_2=\emptyset$. We claim the set $S:=V\backslash ((u\hookrightarrow v)\cup P_1\cup P_2)$ is a connected power dominating set of $G$. 
Indeed, deleting the vertices in $(u\hookrightarrow v)$ from $G$ together with all pendant paths attached to $(u\hookrightarrow v)$ does not disconnect $G$, so $S$ is a connected set. Moreover, note that by definition, a segment cannot include all vertices of $C$; thus, the remaining vertices (or vertex) in $C$ outside $(u\hookrightarrow v)$ can initiate two forcing chains which color $(u\hookrightarrow v)$ and the pendant paths attached to $(u\hookrightarrow v)$. Thus, $(u\hookrightarrow v)$ can be excluded from a connected power dominating set of $G$.
\end{proof}

Let $G=(V,E)$ be a connected graph and $C$ be the vertex set of a block of $G$ such that $G[C]$ is a cycle. We will say a segment $(u\hookrightarrow v)$ of $C$ is \emph{feasible} if it can be excluded from a connected power dominating set of $G$. We will denote by $s(C)$ the maximum size of a feasible segment of $C$. More precisely, in view of Lemma \ref{two_ap_lemma}, $s(C)$ can be defined as follows. Let $\{p_1,\ldots,p_k\}$ be the set of cut vertices in $C$ in counterclockwise order. For $j\in \{0,1,2\}$, if $k\leq j$, let $\mathcal{S}_j(C)=\emptyset$; otherwise, define $\mathcal{S}_j(C)$ as follows, with $i$ read modulo $k$:
\begin{eqnarray*}
\mathcal{S}_0(C)&=&\{(p_i\hookrightarrow p_{i+1}):1\leq i\leq k\}\\
\mathcal{S}_1(C)&=&\{(p_i\hookrightarrow p_{i+2}):1\leq i\leq k, p_{i+1}\in R_1(G)\}\\
\mathcal{S}_2(C)&=&\{(p_i\hookrightarrow p_{i+3}):1\leq i\leq k, p_{i+1}\in R_1(G), p_{i+2}\in R_1(G), p_{i+1}\sim p_{i+2}\}.
\end{eqnarray*} 
Let $\mathcal{S}(C)=\mathcal{S}_0(C)\cup \mathcal{S}_1(C)\cup \mathcal{S}_2(C)$ and if $k>0$, define $s(C)=\max_{S\in \mathcal{S}(C)}\{|S|\}$. Note that $\mathcal{S}_0(C)$ contains all maximal (with respect to inclusion) feasible segments which contain no cut vertices, $\mathcal{S}_1(C)$ contains all maximal feasible segments which contain one cut vertex in $R_1$, and $\mathcal{S}_2(C)$ contains all maximal feasible segments which contain two adjacent cut vertices in $R_1$. Thus, by Lemma  \ref{two_ap_lemma}, $\mathcal{S}(C)$ contains a feasible segment of maximum size. See Figure \ref{figure_cycle} for an illustration.

\begin{figure}[ht!]
\begin{center}
\includegraphics[scale=.35]{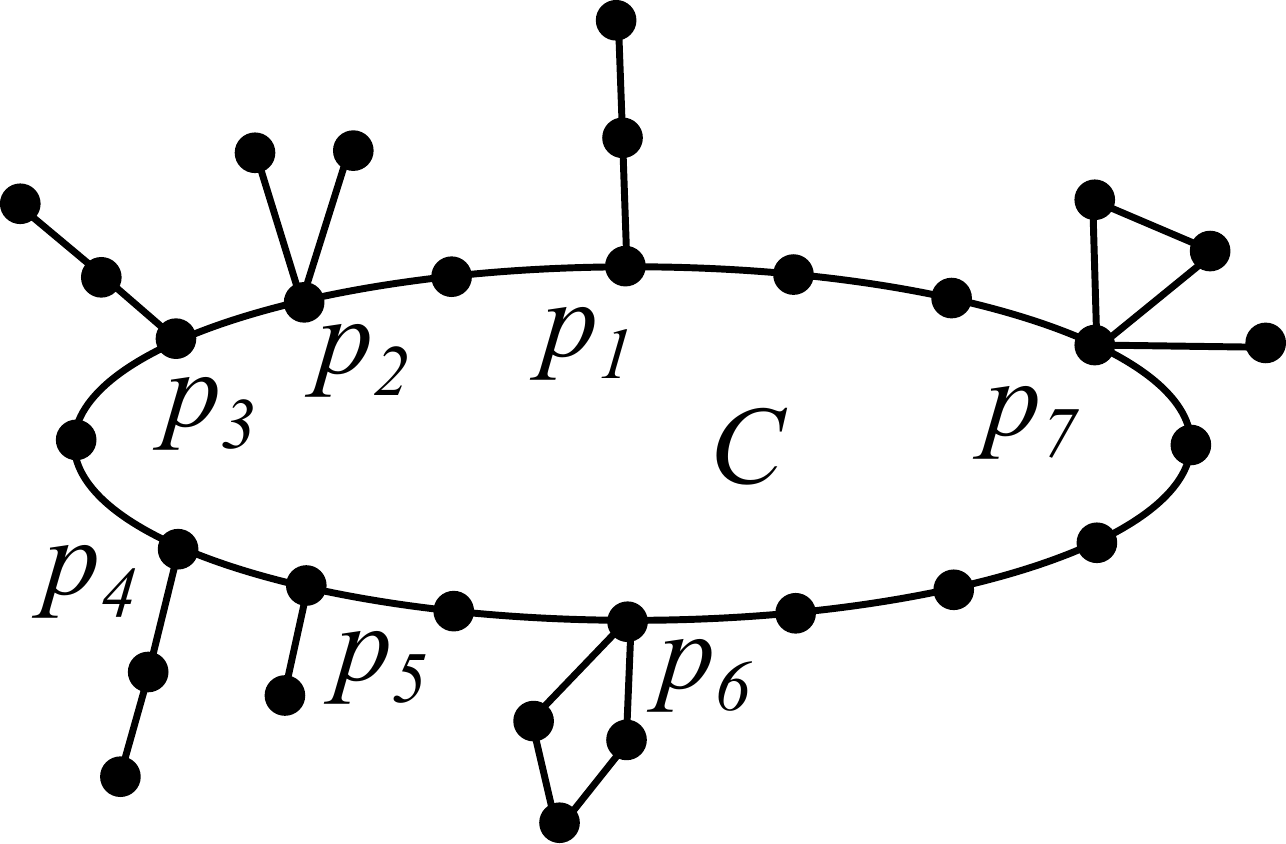}
\caption{Finding maximum size of a feasible segment in a cycle block of a graph: $\mathcal{S}_0(C)=\{(p_1\hookrightarrow p_2),(p_2\hookrightarrow p_3),\ldots, (p_6\hookrightarrow p_7), (p_7\hookrightarrow p_1)\}$; $\mathcal{S}_1(C)=\{(p_2\hookrightarrow p_4), (p_3\hookrightarrow p_5), (p_4\hookrightarrow p_6), (p_7\hookrightarrow p_2)\}$; $\mathcal{S}_2(C)=\{(p_3\hookrightarrow p_6)\}$. Thus, $s(C)=4$.}
\label{figure_cycle}
\end{center}
\end{figure}

\begin{theorem}
\label{thm_cactus}
Let $G=(V,E)$ be a cactus graph, $C_1,\ldots,C_k$ be the vertex sets of the cycles of $G$, and $P_1,\ldots,P_{\ell}$ be the vertex sets of the pendant paths of $G$. Then, 
\begin{equation}
\label{eq_cactus}
\gamma_{P,c}(G)=
\begin{cases}
1 &\text{if } G\simeq C_n \text{ or } G\simeq P_n\\
n-\sum_{i=1}^\ell|P_i|-\sum_{i=1}^k s(C_i) &\text{otherwise}.
\end{cases}
\end{equation} 
Moreover, $\gamma_{P,c}(G)$ can be computed in $O(n)$ time.
\end{theorem}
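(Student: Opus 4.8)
The plan is to handle the two cases of the formula separately, with most of the work concentrated in the second (general) case. For the first case, if $G \simeq C_n$ then a single vertex dominates both its neighbors and then two forcing chains color the rest of the cycle, so $\gamma_{P,c}(C_n) = 1$; the case $G \simeq P_n$ is immediate, so $\gamma_{P,c}(G) = 1$ there as well.

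For the main case, assume $G$ is a cactus that is neither a cycle nor a path. First I would dispose of the case where $G$ has no cycle block at all: then $G$ is a tree, $n - \sum_i |P_i|$ equals the number of vertices not on any pendant path, and since each $s(C_i)$ sum is empty, the right-hand side of (\ref{eq_cactus}) becomes $|\mathcal{M}(G)|$, matching Theorem~\ref{tree_thm}. So assume $G$ contains at least one cycle. The strategy is then to exhibit a specific connected power dominating set $S$ of size exactly $n - \sum_{i=1}^\ell |P_i| - \sum_{i=1}^k s(C_i)$ and then prove no smaller one exists. For the construction, I would let $S$ consist of all of $V$ except: (i) every pendant path $P_i$, and (ii) for each cycle block $C_i$, a feasible segment of $C_i$ of maximum size $s(C_i)$ together with all pendant paths attached to vertices of that segment. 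The key point (using the block structure of a cactus) is that these excluded pieces are pairwise vertex-disjoint and that deleting them does not disconnect $G$, so $G[S]$ is connected; the counting identity then follows because each $P_j$ is either a global pendant path (already removed in (i)) or a pendant path attached inside some cycle (removed in (ii) if attached to the chosen segment, else retained), so the total removed is exactly $\sum_i |P_i| + \sum_i s(C_i)$ after checking there is no double-counting — here I'd need to be slightly careful about how the $P_j$'s attached to chosen segments interact with the $s(C_i)$ count, i.e. whether $s(C_i)$ is meant to count segment vertices only or also their pendant paths; I would adopt the reading that makes the identity hold and note it. That $S$ is power dominating follows from Lemma~\ref{two_ap_lemma}: within each cycle block the vertices of $C_i$ outside the removed segment can launch the two forcing chains that color the segment and its pendant paths, and pendant paths removed in (i) are colored by Theorem~\ref{tree_thm}-style forcing from their base's neighbor in $S$.

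For the lower bound, let $R$ be any minimum connected power dominating set. By Observation~\ref{one_seg_obs}, for each cycle block $C_i$, $R$ excludes at most one segment of $C_i$, and by Lemma~\ref{two_ap_lemma} any excluded segment is feasible, hence has size at most $s(C_i)$; moreover $R$ must contain at least one vertex of each $C_i$ (since $G \neq C_n$, some cut vertex or non-cut structure forces this, and excluding all of $C_i$ would be excluding a full ``segment'' which is disallowed). Outside the cycle blocks, $R$ can avoid an entire pendant path $P_j$ of $G$ — but each such $P_j$ excluded is counted in $\sum_i |P_i|$, and pendant paths attached inside cycles that $R$ omits are exactly the ones hanging off the one excluded segment of that cycle. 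Putting this together, $|V \setminus R| \le \sum_{j=1}^\ell |P_j| + \sum_{i=1}^k s(C_i)$, i.e. $|R| \ge n - \sum |P_j| - \sum s(C_i)$, matching the construction. For the $O(n)$ running time, I would observe that $\mathcal{M}(G)$, the pendant paths, the cycle blocks, and the cut vertices on each cycle can all be identified in linear time (DFS / block-cut tree), and for each cycle $s(C_i)$ is computed by a single counterclockwise scan of its cut vertices evaluating $\mathcal{S}_0, \mathcal{S}_1, \mathcal{S}_2$, with total work linear in the cycle lengths, hence $O(n)$ overall.

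The main obstacle I anticipate is the bookkeeping in the counting identity: making precise exactly which pendant paths are ``attached to the excluded segment'' versus counted globally, ensuring the cactus condition really does make all the excluded pieces disjoint, and confirming that the definition of $s(C_i)$ (segment vertices) lines up with what the formula subtracts so there is no off-by-one or double-count. The connectivity of $G[S]$ after the deletions is conceptually easy but needs the cactus structure stated carefully. The forcing argument for the upper bound and the ``at most one segment'' argument for the lower bound are both essentially immediate from the lemmas already proved.
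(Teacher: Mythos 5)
Your proposal is correct and follows essentially the same route as the paper's proof: the same construction (remove every pendant path and one maximum feasible segment from each cycle), the same appeal to Theorem \ref{tree_thm} for the tree case, the same use of Lemma \ref{MR_lemma}, Observation \ref{one_seg_obs} and Lemma \ref{two_ap_lemma} for the lower bound, and the same linear-time argument. The bookkeeping concern resolves exactly as you hoped: segments are by definition subsets of the cycle, so $s(C_i)$ counts cycle vertices only, while every pendant path (including those attached to the excluded segment) is already counted in $\sum_{i}|P_i|$, so there is no double-counting.
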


\proof
If $G\simeq C_n$ or $G\simeq P_n$, clearly $\gamma_{P,c}(G)=1$; thus, assume henceforth that $G\not\simeq C_n$ and $G\not\simeq P_n$. If $G$ is a tree different from a path, by Theorem \ref{tree_thm}, $\gamma_{P,c}(G)=|\mathcal{M}(G)|=n-\sum_{i=1}^{\ell}|P_i|$, and (\ref{eq_cactus}) holds. If $G$ is not a tree nor a cycle, then each cycle of $G$ has at least one cut vertex, so $s(C_i)$ is well-defined for all $i$. For $1\leq i\leq k$, let $S_i$ be a feasible segment of $C_i$ such that $|S_i|=s(C_i)$. We claim that $R^*:=V\backslash ((\bigcup_{i=1}^{\ell} P_i)\cup (\bigcup_{i=1}^{\ell} S_i))$ is a minimum connected power dominating set of $G$.

Clearly, deleting all pendant paths from $G$ does not disconnect it; also, by Lemma \ref{two_ap_lemma}, deleting feasible segments from $G$ also does not disconnect $G$ (given that all pendant paths attached to them are also deleted). Thus, $R^*$ is a connected set. By Lemma \ref{two_ap_lemma}, for $1\leq i\leq k$, $S_i$ and any pendant paths attached to $S_i$ can be forced by the vertices in $C_i\backslash S_i$. Moreover, all other pendant paths are attached to vertices in $R^*$ which can dominate their bases and thus force the entire paths. Thus, $R^*$ is a power dominating set. 

Now suppose there is a minimum connected power dominating set $R'$ of $G$ with $|R'|<|R^*|$. The vertices in $V$ can be partitioned into $\mathcal{M}$, $\bigcup_{i=1}^k C_i\backslash \mathcal{M}$, and $\bigcup_{i=1}^\ell P_i$. By Lemma~\ref{MR_lemma}, $R'$ contains all vertices in $\mathcal{M}$; the rest of $R'$ consists of vertices in $\bigcup_{i=1}^k C_i\backslash \mathcal{M}$, and $\bigcup_{i=1}^\ell P_i$. By Observation \ref{one_seg_obs}, for $1\leq i\leq k$, any vertices of $C_i\backslash \mathcal{M}$ not contained in $R'$ must form a single segment. Suppose first that $R'$ includes some vertices of $\bigcup_{i=1}^\ell P_i$. The pendant paths containing these vertices cannot be attached to vertices of segments excluded from $R'$, since then $R'$ would be disconnected. Thus, these pendant paths are attached to vertices in $R'$. However, the bases of these pendant paths can be dominated by the vertices in $R'$ to which they are attached, and then the entire pendant paths can be forced. Moreover, removing all vertices in pendant paths from $R'$ cannot disconnect the set. Thus, $R'\backslash \bigcup_{i=1}^\ell P_i$ is a smaller connected power dominating set than $R'$, contradicting the minimality of $R'$. It follows that $R'$ does not contain vertices of $\bigcup_{i=1}^\ell P_i$. Thus, for $1\leq i\leq k$, $R'\cap C_i=C_i\backslash (u_i\hookrightarrow v_i)$ for some segment $(u_i\hookrightarrow v_i)$ in $C_i$. However, since $|R'|<|R^*|$, it follows by the pigeonhole principle that there is some $i\in \{1,\ldots,k\}$ for which $|(u_i\hookrightarrow v_i)|<s(C_i)$, a contradiction. Thus, $R^*$ is a minimum connected power dominating set of $G$.

To verify that the time needed to find $\gamma_{P,c}(G)$ is linear in the order of the graph, first note that the set of cut vertices in $G$, and hence the vertices in $\mathcal{M}$, $C_1,\ldots,C_k$, and $P_1,\ldots,P_\ell$ can be found in linear time (cf. \cite{tarjan}). Then, the sets of cut vertices in each cycle of $G$ can also be found in linear time, and each of the sets of segments $\mathcal{S}_0(C_i)$, $\mathcal{S}_1(C_i)$, and $\mathcal{S}_2(C_i)$ can be found in linear time for $1\leq i\leq k$. Since each collection of sets contains $O(n)$ elements, the set of values $s(C_1),\ldots,s(C_k)$ --- and hence $\gamma_{P,c}(G)$ --- can be found in linear time. 
\qed

\section{Integer programming models for $\gamma_P(G)$ and $\gamma_{P,c}(G)$}

In this section, we propose novel integer programming formulations for power domination and connected power domination, and apply them to several power network test cases. The proposed models can also be used to compute the \emph{power propagation time} of $G$, i.e., the minimum number of timesteps required to power dominate a graph by a minimum power dominating set. More precisely, for a graph $G$ and a power dominating set $S\subset V(G)$, define $S^{[1]}=N[S]$ and for $i\geq 1$, define $S^{[i+1]}=S^{[i]}\cup \{w:\exists v\in S^{[i]}, N(v)\cap (V(G)\backslash S^{[i]})=\{w\}\}$. Then, the \emph{power propagation time} of $S$, denoted $\text{ppt}(G, S)$, is the smallest integer $\ell$ such that $S^{[\ell]}=V(G)$. The \emph{power propagation time} of $G$ is defined as $\text{ppt}(G) = \min\{\text{ppt}(G,S):S \text{ is a minimum power dominating set of } G\}$. A related concept is the $\ell$\emph{-round power domination number}, defined to be the minimum number of vertices needed to power dominate $G$ in power propagation time at most $\ell$. The $\ell$-round power domination number has been studied in \cite{aazami,aazami2,ferrero17,liao16}; it is NP-hard to compute even on planar graphs. Our proposed integer programming model can also be used to compute the $\ell$-round power domination number of a graph.

\subsection{Model formulation}

Given a graph $G=(V,E)$, we first transform $G$ into a directed graph $\vec{G}$ over the same vertex set, with edges $(u,v)$ and $(v,u)$ in $\vec{G}$ for each edge $\{u,v\}$ of $G$. For a directed edge $e$ and vertex $v$, we use the the notation $e\in \delta^- (v)$ to indicate that $v$ is the head of $e$. For each $v \in V$, let $s_v \in \{0,1\}$ be a decision variable such that $s_v = 1$ if $v$ is selected to be in the power dominating set, and $s_v=0$ otherwise. Let $T$ be the maximum number of propagation timesteps allowed; note that any graph can be colored in at most $n$ timesteps by any power dominating set. For each $v\in V$, let $x_v \in \{0,1,\ldots,T\}$ be an integer variable that indicates the timestep in which $v$ becomes colored. Lastly, for each directed edge $e$, let $y_e=1$ if the tail of $e$ power dominates the head of $e$, and $y_e=0$ otherwise. With this notation, the power domination problem can be formulated as in Model 1.\\

\noindent \text{\textbf{Model 1: Power domination}}
{\small
\begin{align}
\nonumber\min \;&\sum_{v\in V} s_v\\
\text{s.t. } \;& s_v + \sum_{e\in \delta^- (v)} y_e = 1 \quad & \forall v\in V \label{cons:1} \\
& x_u - x_v + (T+1)y_e \leq T & \forall e= (u,v) \in E \label{cons:2}\\
& x_w - x_v +(T+1)y_e \leq T + (T+1)s_u & \forall e = (u,v), \forall w\in N(u)\setminus \{v\} \label{cons:3}\\
\nonumber & x \in \{0,\ldots,T\}^n,  y \in \{0,1\}^m, s \in \{0,1\}^n.
\end{align}
}

\begin{theorem}
\label{thm_IP}
For any graph $G$, when $T=n$, the optimal value of Model 1 equals $\gamma_P(G)$.
\end{theorem}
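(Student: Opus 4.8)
The plan is to establish the two inequalities $\mathrm{OPT} \le \gamma_P(G)$ and $\mathrm{OPT} \ge \gamma_P(G)$ separately, where $\mathrm{OPT}$ denotes the optimal value of Model 1 with $T=n$. In both directions the correspondence is between a power dominating set $S$ and its indicator vector $s$, with $x_v$ recording the timestep at which $v$ is colored and $y_{(u,v)}=1$ recording that $u$ is the vertex responsible for coloring $v$.

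For $\mathrm{OPT} \le \gamma_P(G)$, I would start from a minimum power dominating set $S$, run the color change process, and let $\tau(v)$ be the timestep at which $v$ becomes colored, so $\tau(v)=0$ exactly when $v \in S$. Since $S$ is a power dominating set, the sets $S^{[1]} \subsetneq S^{[2]} \subsetneq \cdots$ strictly increase until they equal $V$, so $\tau(v) \le n = T$ for every $v$ and setting $x_v = \tau(v)$ is legal. For each $v \notin S$ I pick a single vertex $u$ responsible for coloring $v$ — a selected neighbor if $v$ is dominated under rule 1, or the forcing vertex if $v$ is colored under rule 2 — and set $y_{(u,v)}=1$, with all other $y$-coordinates equal to $0$. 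Feasibility then reduces to checking \eqref{cons:1}--\eqref{cons:3}: \eqref{cons:1} holds because each $v \notin S$ has exactly one responsible vertex; \eqref{cons:2} holds because a responsible vertex is colored strictly before the vertex it colors; and \eqref{cons:3} holds because, in the rule-2 case ($s_u=0$), every other neighbor of $u$ is already colored at the moment the force occurs, whereas in the rule-1 case ($s_u=1$) the term $(T+1)s_u$ makes the constraint vacuous. This feasible point has objective value $|S| = \gamma_P(G)$.

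For $\mathrm{OPT} \ge \gamma_P(G)$, I would take an arbitrary feasible point $(s,x,y)$, set $S = \{v : s_v = 1\}$, and show $S$ is a power dominating set of $G$; then $\gamma_P(G) \le |S| = \sum_v s_v$, and minimizing over feasible points gives the inequality. The claim that every vertex is eventually colored by the process started from $S$ is proved by strong induction on $x_v$. If $x_v = 0$, then \eqref{cons:2} forces $y_e = 0$ on every edge into $v$, hence $s_v = 1$ by \eqref{cons:1} and $v \in S$. If $x_v \ge 1$ and $s_v = 1$ we are done; otherwise \eqref{cons:1} gives a unique $e = (u,v)$ with $y_e = 1$, and \eqref{cons:2} gives $x_u < x_v$, so $u$ is colored by the induction hypothesis. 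If $s_u = 1$ then $v \in N(S)$ is colored under rule 1; if $s_u = 0$ then \eqref{cons:3} forces $x_w < x_v$ for every $w \in N(u) \setminus \{v\}$, so all those neighbors are colored by induction, and once $u$ together with all its neighbors except $v$ are colored, rule 2 forces $v$.

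The routine part is the constraint verification in the first direction; the point that needs care is the induction in the second direction, where one must check that the strict inequalities produced by \eqref{cons:2} and \eqref{cons:3} make the recursion well-founded and that, in the $s_u = 0$ case, having $u$ and all of $N(u) \setminus \{v\}$ colored genuinely causes rule 2 to fire on $v$. It is also worth noting explicitly where $T = n$ is used: only in the first direction, to guarantee that the domain $\{0,\dots,T\}$ of the $x$-variables is large enough to hold the actual coloring times; for smaller $T$ the same model computes the $\ell$-round power domination number, which is what makes it reusable for power propagation time later in Section 6.
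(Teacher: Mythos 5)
Your proposal is correct and follows essentially the same route as the paper: both directions use the same correspondence between a power dominating set $S$, the coloring times $x_v$, and the responsible/forcing edges $y_e$, with $T=n$ needed only to accommodate the actual coloring times. Your strong induction on $x_v$ in the second direction is a slightly more careful formalization of the paper's informal argument that the edges with $y_e=1$ form valid forcing chains, but it is the same underlying idea.
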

\begin{proof}
Let $S$ be a power dominating set of $G=(V,E)$. Then, for each $v\in V$, either $v$ is in $S$ and thus $s_v = 1$, or $v$ is colored by some other vertex $u$ of $G$, i.e., $v$ the head of some edge $uv$ with $y_{uv}=1$. Hence, constraint \eqref{cons:1} is satisfied. Let $x_v$ be the timestep at which vertex $v$ gets colored. Since $T$ is the maximum difference between the timesteps in which any two vertices are colored, for any edge $e$ with $y_e=0$, constraints \eqref{cons:2} and \eqref{cons:3} are satisfied. Moreover, note that a vertex which is not in $S$ cannot color another vertex until all-but-one of its neighbors are colored. Thus, for any edge $e = (u,v)$ such that $y_e = 1$, $u$ must be colored before $v$, and therefore $x_u < x_v$. Additionally, $x_w < x_v$ for all neighbors $w$ of $u$, unless $u\in S$ (if $u\in S$, all neighbors of $u$ get colored, regardless of how many neighbors there are). Therefore, constraints \eqref{cons:2} and \eqref{cons:3} are satisfied, so all constraints are valid for an arbitrary power dominating set $S$.

Now, let $(x,y,s)$ be a feasible solution to Model 1, and let $S = \{v : s_v = 1, v\in V\}$. Let $\mathcal{F}$ be the set of paths formed by the edges for which $y_e = 1$. For an edge $e =(u,v)$ such that $y_e = 1$ and $s_u=0$, by constraints \eqref{cons:2} and \eqref{cons:3} there must be some integer $x_v\in \{0,\ldots,T\}$ such that $x_w+1\leq x_v$ for $w\in N(u)\backslash\{v\}$. By interpreting $x_w$ as the timestep in which vertex $w$ is forced, it follows that there must exist some timestep $x_w\in \{0,\ldots,T\}$ such that $u$ and all its neighbors except $v$ have been forced in previous timesteps. Thus, $u$ can force $v$ in timestep $x_w$. Thus, the paths formed by the edges with $y_e=1$ are forcing chains of $G$. Additionally, by constraint \eqref{cons:1} every vertex $v$ is either in $S$, or has an edge $e = (u,v)$ such that $y_e = 1$, and thus must get colored at some timestep. Therefore, $S$ is a power dominating set of $G$ and $\mathcal{F}$ is a set of forcing chains associated with $S$. Additionally, since $|S|$ is minimized by the objective of Model 1, $S$ is a minimum power dominating set.
\end{proof}

%Now, let $(x,y,s)$ be a feasible solution to Model 1, and let $S = \{v : s_v = 1, v\in V\}$. Let $P$ be the set of paths formed by the edges for which $y_e = 1$. For an edge $e =(u,v)$ such that $y_e = 1$, by constraints \eqref{cons:2} and \eqref{cons:3} there must be some path for which $u$ power dominates $v$. Additionally, by constraint \eqref{cons:1} all vertices $v$ are either in $S$ or has an edge $e = \{u,v\}$ such that $y_e = 1$, thus $v$ must be power dominated at some step of the power domination process. Hence, $S$ is a power dominating set through the paths in $P$. Additionally, as $|S|$ is minimized by the objective of Model 1, $S$ is a minimum power dominating set.

\noindent The following two corollaries easily follow from the proof of Theorem \ref{thm_IP}.

\begin{corollary}
For any graph $G$, when $T=\ell$, the optimal value of Model 1 equals the $\ell$-round power domination  number.
\end{corollary}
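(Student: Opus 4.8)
The plan is to adapt the proof of Theorem~\ref{thm_IP} almost verbatim; the only new point is that fixing $T=\ell$ in Model~1 turns the range of the variables $x_v$ into a hard cap of $\ell$ on the number of propagation rounds, and this cap is precisely the requirement that the power propagation time be at most $\ell$. So I would show that feasible solutions of Model~1 with $T=\ell$ are in the same correspondence with power dominating sets as in Theorem~\ref{thm_IP}, except that now exactly those power dominating sets $S$ with $\text{ppt}(G,S)\le\ell$ arise.

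First I would argue the ``$\le$'' direction. Let $S$ be a power dominating set with $\text{ppt}(G,S)\le\ell$. Using the sets $S^{[1]},S^{[2]},\dots$ from the definition, set $x_v=0$ for $v\in S$ and $x_v=\min\{i: v\in S^{[i]}\}$ for $v\notin S$; since $\text{ppt}(G,S)\le\ell$, every $x_v$ lies in $\{0,\dots,\ell\}$, so the range constraint with $T=\ell$ holds. The variables $y_e$ are chosen exactly as in the proof of Theorem~\ref{thm_IP} (for each $v\notin S$, along the edge by which $v$ is first dominated or forced), and constraints~\eqref{cons:1}--\eqref{cons:3} are verified exactly as there, using that a vertex colored in round $i$ has its ``forcer'' and the forcer's other neighbors all colored in rounds strictly before $i$. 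This yields a feasible solution of objective value $|S|$, so the optimum of Model~1 with $T=\ell$ is at most the $\ell$-round power domination number.

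For the ``$\ge$'' direction I would take a feasible solution $(x,y,s)$ with $T=\ell$ and put $S=\{v:s_v=1\}$. As in Theorem~\ref{thm_IP}, the edges with $y_e=1$ form forcing chains, so $S$ is a power dominating set; the extra content is showing $\text{ppt}(G,S)\le\ell$. The key lemma is that $v\in S^{[\max(1,x_v)]}$ for every $v$, proved by induction on $x_v$: if $x_v=0$ then constraints~\eqref{cons:1}--\eqref{cons:2} force $v\in S$; if $x_v\ge 1$ and $v\notin S$, let $(u,v)$ be its unique selected in-edge --- if $s_u=1$ then $v\in N(u)\subseteq S^{[1]}$, and if $s_u=0$ then \eqref{cons:2}--\eqref{cons:3} give $x_u<x_v$ and $x_w<x_v$ for all $w\in N(u)\setminus\{v\}$, so by the induction hypothesis $u$ and all of $N(u)\setminus\{v\}$ lie in $S^{[x_v-1]}$, whence $v\in S^{[x_v]}$. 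Since every $x_v\le\ell$, this gives $S^{[\ell]}=V$, i.e.\ $\text{ppt}(G,S)\le\ell$, so $|S|$ is at least the $\ell$-round power domination number; together with minimality of the objective this gives equality.

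The main obstacle is this ``$\ge$'' direction: the timestep labels $x_v$ in a feasible solution need not coincide with the greedy rounds $\min\{i:v\in S^{[i]}\}$ (a feasible solution may schedule forces ``lazily''), so one must check that the greedy propagation process is never slower than any schedule certified by the $x_v$'s --- which is exactly what the monotonicity induction above establishes, and which also quietly uses that at round $x_v$ the forcer $u$ has at most one uncolored neighbor. Everything else is a routine re-run of the bookkeeping in the proof of Theorem~\ref{thm_IP}.
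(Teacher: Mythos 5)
Your proposal is correct and follows the paper's intended route: the paper derives this corollary directly from the proof of Theorem~\ref{thm_IP}, observing that setting $T=\ell$ caps the timestep variables $x_v$ and hence restricts the correspondence to power dominating sets $S$ with $\text{ppt}(G,S)\leq\ell$, which is exactly what you do. Your explicit induction showing $v\in S^{[\max(1,x_v)]}$ in the ``$\geq$'' direction is a careful spelling-out of what the paper leaves implicit, not a different argument.
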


\begin{corollary}
For any graph $G$, $\emph{ppt}(G)$ can be found by re-running Model 1 $O(\log n)$ times, using binary search to find the smallest value of $T$ for which the output is the same as for $T=n$.
\end{corollary}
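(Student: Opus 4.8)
The plan is to recast $\text{ppt}(G)$ as the location of a threshold in the parameter $T$ of Model~1, and then to justify the binary search. Write $g(T)$ for the optimal value of Model~1 run with bound $T$, and write $\gamma_P^{(\ell)}(G)$ for the $\ell$-round power domination number. By Theorem~\ref{thm_IP} we have $g(n)=\gamma_P(G)$, and by the preceding corollary $g(\ell)=\gamma_P^{(\ell)}(G)$ for every $1\le\ell\le n$; also recall that every power dominating set colors $G$ within $n$ timesteps, so $\text{ppt}(G)\le n$ and $\gamma_P^{(n)}(G)=\gamma_P(G)$. The first step is to observe that $\gamma_P^{(\ell)}(G)$ is non-increasing in $\ell$ (allowing more rounds can only help) and satisfies $\gamma_P^{(\ell)}(G)\ge\gamma_P(G)$ (a set that power dominates within $\ell$ rounds is in particular a power dominating set). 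Hence there is a well-defined smallest integer $T^{*}\in\{1,\dots,n\}$ with $g(T^{*})=g(n)$, and $g(\ell)=\gamma_P(G)$ precisely for $\ell\ge T^{*}$.

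The crux is the identity $T^{*}=\text{ppt}(G)$, which I would prove directly from the definitions. If $\ell\ge\text{ppt}(G)$, pick a minimum power dominating set $S$ with $\text{ppt}(G,S)=\text{ppt}(G)\le\ell$; then $S$ witnesses $\gamma_P^{(\ell)}(G)\le|S|=\gamma_P(G)$, so equality holds and $T^{*}\le\text{ppt}(G)$. Conversely, if $\ell<\text{ppt}(G)$ and we had $\gamma_P^{(\ell)}(G)=\gamma_P(G)$, then any optimal $\ell$-round set $S'$ would be a minimum power dominating set with $\text{ppt}(G,S')\le\ell<\text{ppt}(G)$, contradicting the definition of $\text{ppt}(G)$; hence $\gamma_P^{(\ell)}(G)>\gamma_P(G)$ whenever $\ell<\text{ppt}(G)$, giving $T^{*}\ge\text{ppt}(G)$.

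Finally, since the predicate ``$g(T)=g(n)$'' holds exactly when $T\ge T^{*}$, it is monotone over $T\in\{1,\dots,n\}$, so binary search finds $T^{*}=\text{ppt}(G)$ using $O(\log n)$ evaluations of the predicate, each of which is a single run of Model~1 (together with one run at $T=n$ to supply the comparison value). The only place where care is needed is the equality $T^{*}=\text{ppt}(G)$: one must be sure that an optimal solution of Model~1 with bound $\ell$ really does correspond to a minimum power dominating set that finishes within $\ell$ timesteps, and conversely that such a set yields a feasible solution of value $\gamma_P(G)$. This is exactly the forcing-chain correspondence already carried out in the proof of Theorem~\ref{thm_IP} and its first corollary, so no new technical difficulty arises; the argument above just repackages those facts.
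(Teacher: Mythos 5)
Your argument is correct, and it is precisely the reasoning the paper leaves implicit: the paper offers no separate proof beyond noting that the corollary ``easily follows'' from Theorem~\ref{thm_IP}, and your identification of $\text{ppt}(G)$ with the threshold value $T^{*}$ at which the optimal value of Model~1 (equivalently, the $\ell$-round power domination number) drops to $\gamma_P(G)$, together with monotonicity in $T$, is exactly what justifies the binary search. No gaps; your write-up simply makes the intended threshold argument explicit.
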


Another integer programming formulation for power domination and $\ell$-round power domination is given by Aazami \cite{aazami2}. Fan and Watson \cite{FanWatson} have also explored integer programming formulations for power domination, as well as connected power domination. However, their formulations model a slightly different problem than the one typically considered in the power domination literature. In particular, the model proposed in \cite{FanWatson}, shown in Model 2, has decision variables $x_i \in \{0,1\}$, $i \in V$, for containment in a power dominating set, and $p_{ij} \in \{0,1\}$, $i,j \in V$, for whether vertex $i$ can power dominate vertex $j$. Further, $a_{i,j}$ is the $i,j^{th}$ element of the adjacency matrix of the graph, and $Z_i = 1$ for $i \in V$ if $i \in V_Z$, where $V_Z$ is a set of zero-injection buses. Zero-injection buses are a set of nodes that can perform the zero forcing propagation in the graph. Therefore, in the model of Fan and Watson, the set of zero-injection buses is assumed to be known \emph{a priori}. If the set $V_Z$ is not known, the model becomes nonlinear, and if it is assumed that all vertices are in $V_Z$, the optimal solution to the model is $x=0$. \\

\noindent \text{\textbf{Model 2: Power domination (Fan and Watson)}}
\begin{align*}
\min \;& \sum_{i} x_i \\
\text{s.t. } \;& \sum_{j} a_{ij} x_j + \sum_{j} a_{ij} Z_i p_{ji} \geq 1 &\forall i \in V \\
& \sum_j a_{ij} p_{ij} = Z_i &\forall i \in V \\
& p_{ij} = 0 &\forall i,j \text{ with } a_{ij} =0 \text{ or } i \notin V_Z \\
& x\in \{0,1\}^n, p \in \{0,1\}^{n^2}
\end{align*}

In order to solve the connected power dominating set problem, additional constraints must be added to ensure connectivity of the power dominating set. There are multiple methods for ensuring the connectivity of a set of vertices, such as the standard minimum spanning tree with sub-tour elimination, Miller-Tucker-Zemlin (MTZ) constraints \cite{miller1960}, Martin constraints \cite{martinCons}, single-commodity flow constraints \cite{commodityCons}, and multi-commodity flow constraints \cite{commodityCons}. Fan and Watson \cite{FanWatson} explored the effectiveness of various connectivity constraints applied to their formulation of power domination, and found that MTZ constraints offered the best computational results. Thus, we apply the MTZ constraints to Model 1 in order to solve the connected power domination problem. 
We used an implementation of these constraints following the method proposed in \cite{quintao2010}, with a modification as explained in \cite{desrochers1991}.

\subsection{Computational Results}

We computed the power domination numbers, connected power domination numbers, and power propagation times of six power graphs from a standard electrical network benchmark dataset  \cite{IEEEtest2012}. Multiple edges and loops were removed from the graph instances. All integer programming formulations were implemented in Julia 0.6.0 using Gurobi 7.5.2; experiments were run on a 2014 MacBook Pro with a 2.6 GHz Intel Core i5, and 8 GB of 1600 MHz DDR3 RAM. The optimality gap in Gurobi was left as default, with a time limit of 7200 seconds.

Computational results are summarized in Tables \ref{fig:compresults} and \ref{fig:compresults2}. Table \ref{fig:compresults} lists the order, size, power domination number, connected power domination number, and the associated runtimes for each graph in the dataset. $T$ is taken to be equal to $|V(G)|$ for each graph $G$, in order to ensure the feasibility of the integer program. As can be seen from Table \ref{fig:compresults}, the connected power domination number is greater than the power domination for the six test graphs. Additionally, for a given $T$, it is generally faster to compute the power domination number than the connected power domination number. Similar results have been observed in other related problems. For example, while both domination and connected domination are NP-complete \cite{GJ}, the latter is generally harder to solve exactly. This disparity has been attributed to the non-locality of the connected domination problem, since exact algorithms are often unable to capture global properties like connectivity \cite{connected_dom}. In some contrast, computational experiments in \cite{brimkov_fast} have shown that algorithms for connected zero forcing are slightly faster than algorithms for zero forcing. Thus, in this aspect, power domination seems to behave more like domination than like zero forcing. In most test cases, the runtimes obtained using Model 1 are lower than the runtimes reported in \cite{FanWatson} obtained using Model 2. Computational results for the integer programming model for power domination proposed in \cite{aazami2} are not available for comparison.

\begin{table}[ht!]
\centering
\label{my-label}
\begin{tabular}{lcc||cc||cc}
             &       &       & \multicolumn{2}{c||}{Power Domination} & \multicolumn{2}{c}{Connected Power Domination} \\ \hline
$G$            & $|V|$ & $|E|$ & $\gamma_P(G)$       & Time (sec)      & $\gamma_{c,P}(G)$         & Time (sec)         \\ \hline
IEEE Bus 14  & 14    & 20    & 2                   & 0.0667          & 2                         & 0.1408             \\
IEEE Bus 30  & 30    & 41    & 3                   & 0.2060          & 4                         & 0.6452             \\
IEEE Bus 57  & 57    & 80    & 3                   & 1.8858          & 6                         & 12.3784            \\
RTS-96       & 73    & 108   & 6                   & 6.4016          & 16                        & 684.5847           \\
IEEE Bus 118 & 118   & 186   & 8                   & 14.9783         & 19                        & 701.5809           \\
IEEE Bus 300 & 300   & 409   & 30                  & 271.0804        & 64*                       & $>7200$               
\end{tabular}
\caption{Computational results for finding minimum power dominating sets using Model 1, and minimum connected power dominating sets using Model 1 with MTZ constraints. Each run was completed with $T = |V(G)|$, with a time limit of 7200 seconds. For IEEE Bus 300, the reported $\gamma_{c,P}(G)$ is the best feasible solution at the point of timeout. }
\label{fig:compresults}
\end{table}

Note that constraints \eqref{cons:2} and \eqref{cons:3} in Model 1 are disjunctive constraints of \emph{big-M} form. Thus, in general, the model is expected to perform better if a there is a small upper bound on the number of steps required to power dominate $G$. We explored this further by re-running the models for different values of $T$ ranging between $n$ and $\text{ppt}(G)$. Table \ref{fig:compresults2} lists the power propagation time $T_{\emph{p}}=\text{ppt}(G)$, the value of $T$ associated with the minimum runtime, denoted $T_{\emph{b}}$, the runtimes associated with these values of $T$, and the average runtime over all $T$, for power domination and connected power domination. From Table \ref{fig:compresults2}, it can be seen that although the best runtimes are not achieved by $T_p$, they are usually achieved by relatively small values of $T$ (relative to the order of the graph), as expected from the \emph{big-M} constraints.

\begin{table}[ht!]
\centering
\begin{tabular}{l||cc|cc|c||cc|cc|c}

             &       \multicolumn{5}{c||}{Power Domination} & \multicolumn{5}{c}{Connected Power Domination} \\ \hline
$G$          & $T_{\emph{p}}$ & Time & $T_{\emph{b}}$ & Time & Avg. & $T_{\emph{p}}$ & Time & $T_{\emph{b}}$ & Time & Avg. \\ \hline
IEEE Bus 14  & 2                 & 0.09       & 6                 & 0.039      & 0.06           & 4                 & 0.18       & 7                 & 0.13       & 0.17\\
IEEE Bus 30  & 6                 & 0.17       & 14                & 0.09       & 0.16           & 6                 & 0.66       & 15                & 0.22       & 0.47\\
IEEE Bus 57  & 21                & 1.38       & 28                & 0.53       & 1.82           & 22                & 16.85      & 27                & 4.80       & 9.23\\
RTS-96       & 7                 & 21.11      & 13                & 2.24       & 5.96           & 7                 & 313.56     & 13                & 137.10     & 321.16\\
IEEE Bus 118 & 16                & 8.42       & 35                & 3.33       & 6.37           & 9                 & 318.91     & 39                & 297.73     & 449.24\\
IEEE Bus 300 & 10                & 197.31     & 17                & 23.96      & 382.06         & -                 & -          & -                 & -          & -              
\end{tabular}
\caption{Computational results for finding power propagation time and runtimes for different values of $T$. Runtimes are reported in seconds. Results for connected power domination of IEEE Bus 300 are unavailable, since they exceeded the timeout limit.}
\label{fig:compresults2}
\end{table}

\section{Conclusion}

In this paper, we presented several structural, algorithmic, and computational results on connected power domination. We explored properties of vertices which are contained in every connected power dominating set, and the effects of certain vertex and edge operations on the connected power domination number. We also gave a formula for computing the connected power domination number of a graph in terms of the connected power domination numbers of its biconnected components. We established the NP-completeness of connected power domination, but showed efficient algorithms for block graphs and cactus graphs. Finally, we gave integer programming models for computing the power domination number, connected power domination number, and power propagation time of a graph, and reported computational results. 

One direction for future work could focus on refining Theorem \ref{np_theorem} and generalizing Theorems \ref{block_graph_thm} and \ref{thm_cactus}. For example, the power domination problem is NP-complete even for bipartite graphs, chordal graphs, planar graphs, and split graphs \cite{guo2005,powerdom3,liao}; on the other hand, efficient algorithms are available for interval graphs \cite{liao}, graphs of bounded treewidth \cite{aazami2009,kneis}, and several other families. It would be interesting to determine whether connected power domination is NP-complete or polynomially-solvable for these classes of graphs. Another problem of interest is to generalize Theorem \ref{theorem_blocks} to separating sets of larger size. Such theoretical refinements could also be leveraged in general-purpose solution approaches, such as the integer programming models given in Section 6.

\section*{Acknowledgements}
This work is supported by the National Science Foundation, under Grants CMMI-1300477 and CMMI-1404864.

\bibliographystyle{abbrv}
\bibliography{mybib}

\end{document}